\newtheorem{theorem}{Theorem}
\newtheorem{lemma}[theorem]{Lemma}
\newtheorem{corollary}[theorem]{Corollary}
\newtheorem{claim}[theorem]{Claim}
\newenvironment{proof}{
\par
\noindent {\bf Proof.}\rm}%
{\mbox{}\hfill\rule{0.5em}{0.809em}\par}
\title{\bf \normalsize \Large Some Results on the Target Set Selection Problem}
\author{\small Chun-Ying~Chiang\thanks{Partially supported by
 National Science Council under grant NSC97-2628-M-008-018-MY3.},
 Liang-Hao~Huang\thanks{Partially supported by
 National Science Council under grant NSC98-2811-M-008-072.},
 Bo-Jr~Li, Jiaojiao~Wu,
 Hong-Gwa
Yeh\thanks{Partially supported by National Science Council under
grant NSC97-2628-M-008-018-MY3}
 \thanks{Corresponding author (hgyeh@math.ncu.edu.tw)}\\
{\footnotesize \em Department of Mathematics, National Central
University, Taiwan}\\
{\footnotesize \em Department of Applied Mathematics, National Sun
Yat-sen University, Taiwan}}
\date{}
\begin{document}

\maketitle
\baselineskip=17pt

\begin{abstract}
 In this paper we consider a
 fundamental problem in the area of viral marketing, called
 T{\scriptsize ARGET} S{\scriptsize ET} S{\scriptsize ELECTION}
    problem.
 We study the
    problem when the underlying graph is a
   block-cactus graph, a chordal graph or a Hamming graph.
   We show that
   if $G$ is a block-cactus graph, then
   the T{\scriptsize ARGET} S{\scriptsize ET} S{\scriptsize ELECTION}
   problem can be solved in linear time,
   which generalizes Chen's result \cite{chen2009} for trees,
   and the time complexity is much better than
   the algorithm in
   \cite{treewidth} (for bounded treewidth graphs)
   when restricted to block-cactus graphs.
   We show that
   if the underlying graph $G$ is a chordal graph with thresholds
   $\theta(v)\leq 2$ for each vertex $v$ in $G$,
   then the
    problem  can be solved in linear time.
        For a Hamming graph $G$
    having thresholds $\theta(v)=2$ for each vertex $v$ of $G$,
    we precisely determine an optimal target set $S$ for
    $(G,\theta)$.
    These results partially answer  an open problem raised by
    Dreyer and Roberts \cite{Dreyer2009}.
\medskip

\noindent
 {\bf \em Key words:} target set selection, viral marketing,
 tree, block graph, block-cactus graph, chordal graph,
 Hamming graph,
  social networks, diffusion of innovations.
\end{abstract}

  %
  %
  \section{Introduction and preliminaries}
  \label{intro}
  A {\em graph} $G$ consists of a set $V(G)$ of {\em vertices} together
  with a set $E(G)$ of unordered pairs of vertices called {\em edges}.
  We use $uv$ for an edge $\{u,v\}$. Two vertices $u$ and $v$ are
  {\em adjacent} to each other if $uv \in E(G)$. In this paper, all
  graphs are finite and have no loops or multiple edges.
   For $S\subseteq V(G)$, the {\em
  subgraph of $G$ induced by $S$} is the graph $G[S]$ with vertex set
  $S$ and edge set $\{ uv \in E(G): u,v \in S\}$. Denote by $G-S$ the
   subgraph of $G$ induced by $V(G)\setminus S$ and, for convenience,
   we write $G-v$ for $G-\{v\}$ when $v\in V(G)$.
   The {\em neighborhood} of a vertex $v$ in $G$ is the
   set $N_G(v) = \{u \in V(G): uv \in E\}$.
   The {\em degree} $d_G(v)$ of  $v$  is defined by
   $d_G(v)=|N_G(v)|$.
   The {\em distance} $d_G(x,y)$ of two vertices $x$ and $y$ in $G$ is
   defined to be the length of the shortest path from $x$ to $y$ in
   $G$.
   A {\em complete graph} is a graph in which
   every two distinct vertices are adjacent.
   The complete graph on $n$ vertices is denoted by $K_n$.
   The {\em $n$-cycle} is the graph $C_n$
   with $V(C_n)=\{v_1, v_2, \ldots, v_n\}$ and $E(C_n)=\{v_1v_2, v_2v_3,
   \ldots, v_{n-1}v_n, v_nv_1\}$.
   The {\em $n$-path} is the graph $P_n$ with
   $V(P_n)=\{v_1, v_2, \ldots, v_n\}$ and $E(P_n)=\{v_1v_2, v_2v_3,
   \ldots, v_{n-1}v_n\}$.

  %
  %
   The topology of a person-to-person recommendation social network
   is usually modeled by a graph $G$ in which
   the vertices $V(G)$ represent customers, and edges $E(G)$ connect people to
   their friends.
   %
   %
   %
   %
 Consider the following scenario: A
 company wish to market a new product.
 The company has at hand a description of the social network $G$
 formed among a sample of potential customers.
 The company wants to target key potential customers $S\subseteq V(G)$ of the social
 network and persuade them into adopting the new product
 by handing out free samples.
 We assume that individuals in $S$ will be convinced to adopt the new
 product after they receive a free sample, and
 the friends of customers in $S$ would be
 persuaded into buying the new product, which in turn will
 recommend the product to other friends.
 The company hopes that by word-of-mouth effects,
 convinced vertices in $S$
 can trigger a cascade of further adoptions, and
 many customers will ultimately be persuaded.
  %
  %
 This advertising technique of spreading commercial message via social networks $G$
 is called {\em viral marketing} by analogy with
 computer viruses.
 But now how to find a good set of potential customers $S$
 to target?

  %
  %
  %
  In general, each vertex $v$ is assigned a threshold value
  $\theta(v)$. The thresholds represent the different latent tendencies of
  vertices (customers) to buy the new product when their neighbors
  (friends) do.  To be precise,
   %
   %
   %
   %
   let $G$ be a connected undirected graph equipped
   with {\em thresholds} $\theta:V(G)\rightarrow \mathbb{Z}$.
   Denote by $(G,\theta)$ the social network $G$ equipped with thresholds $\theta$.
   %
   %
   %
   %
   When $\theta$ is a constant function such that $\theta(v)=k$ for
   all vertices $v$, $(G,\theta)$ will be written as $(G,k)$ for
   short.
   %
   %
   %
   %
   Vertices $v$ of $G$ are in one of two states,
   active or inactive,
   which indicate whether $v$ is persuaded into buying the
   new product.
   We call a vertex $v$ {\em active}
   if it has been convinced to adopt the new product
   and assume that vertex $v$ becomes active
   if $\theta(v)$ of its neighbors have adopted the new product.

  %
  %
  %
  In this paper we consider the following repetitive process,
  called {\em activation process} in
  $(G,\theta)$ starting at {\em target set} $S\subseteq V(G)$,
  which unfolds in discrete steps.
  Initially (at time $0$), set all vertices in $S$
  to be active (with all other vertices inactive).
  After that, at each time step, the states of vertices
  are updated according to following rule:
  %
  %
  %
  %
   \begin{description}
          \item[Parallel updating rule:]
          All inactive vertices $v$ that have at least
          $\theta(v)$ already-active neighbors become active.
   \end{description}

  The activation
  process terminates when no more vertices can get activated.
  %
  %
  %
  %
  %
  Let
  $[S]^G_\theta$
  denote the set of vertices that are active at the end
  of the process.
  %
  %
  %
  %
  If $F\subseteq [S]^G_\theta$, then we say that the target set $S$ {\em influences}
  $F$ in $(G,\theta)$.
  %
  %
  %
  %
  Notice that if $v$ has threshold
  $\theta(v)> d_G(v)$ and $v\in [S]_\theta^G$ for some target set $S$, then
  it must be $v\in S$. We also note that, according to our rule, if
  an inactive vertex $v$ has threshold
  $\theta(v)\leq 0$ at time step $t$,
  then it becomes active automatically at the next time step.
  We are interested in the following optimization problem:
     \begin{description}
          \item[T{\scriptsize ARGET} S{\scriptsize ET}
          S{\scriptsize ELECTION}:]
          Finding a target set $S$ of smallest possible size
  that influences all vertices in the social network $(G,\theta)$,
  that is $[S]^G_\theta=V(G)$.
   \end{description}
   We define {\rm min-seed}$(G,\theta)$ to be the minimum size of a target set
  that guarantees that all 
  vertices in $(G,\theta)$ are eventually active at the end
  of the activation process, that is,
  $\mbox{min-seed}(G,\theta)=\min\{|S|: S\subseteq V(G)$ and $[S]^G_\theta=V(G)\}$.
  %
  %
  For
  $S\subseteq V(G)$,  if $[S]^G_\theta=V(G)$
  and $|S|=\mbox{\rm min-seed}(G,\theta)$, then we call $S$
  an {\em optimal target set}  for $(G,\theta)$.

  %
  %
  %
  %
  %
  Domingos and Richardson \cite{domingos2001} considered
  T{\scriptsize ARGET} S{\scriptsize ET} S{\scriptsize ELECTION}
  problem in a probabilistic setting and presented heuristic
  solutions.
  Kempe, Kleinberg, and Tardos \cite{kempe2003}
  considered probabilistic thresholds, called linear threshold model,
  and focused on the maximization version of the
  T{\scriptsize ARGET} S{\scriptsize ET} S{\scriptsize ELECTION}
  problem $-$ for any given $k$, find a target set $S$ of size $k$
  to maximize the expected number of active vertices at the end of
  the activation process. They showed that this problem is NP-hard
  and proved that a hill-climbing algorithm can efficiently
  obtain an approximation solution that is $63\%$ of optimal.

  In this paper we only consider
  the
  T{\scriptsize ARGET} S{\scriptsize ET} S{\scriptsize ELECTION}
  problem with deterministic, explicitly given, thresholds.
  In 2002, Peleg \cite{Peleg2002} showed that
  this problem is NP-hard for majority thresholds,
  that is $\theta(v)=\lceil d_G(v)/2\rceil$ for each vertex $v$ in
  $G$. In 2009, Dreyer and Roberts \cite{Dreyer2009} showed that
  the problem is NP-hard for constant thresholds $-$ given a fixed
  $k\geq 3$, $\theta(v)=k$ for each vertex $v$ in $G$, and
  Chen \cite{chen2009} proved that it is NP-hard
  for bounded bipartite graphs $G$ with thresholds at most 2.

  In general, the
  T{\scriptsize ARGET} S{\scriptsize ET} S{\scriptsize ELECTION}
  problem is not just NP-hard but also extremely hard to approximate.
  Kempe, Kleinberg, and Tardos \cite{kempe2003} showed that
  a maximization version of
  T{\scriptsize ARGET} S{\scriptsize ET} S{\scriptsize ELECTION}
  with constant thresholds
  cannot be approximated within any non-trivial factor,
  unless ${\rm P}={\rm NP}$.
  In 2009, Chen \cite{chen2009} proved that
  given any $n$-vertices regular graph with
   thresholds
   $\theta(v)\leq 2$
   for any vertex $v$,
   the T{\scriptsize ARGET} S{\scriptsize ET} S{\scriptsize ELECTION}
    problem can
   not be approximated within the ratio of
   $O(2^{\log^{1-\epsilon}n})$,
    for any fixed constant $\epsilon >0$,
   unless ${\rm NP}\subseteq {\rm DTIME}(n^{{\sf poly}\log(n)})$.

   Very little is known about min-seed$(G,\theta)$ for
   specific classes of graphs $G$.
   Dreyer and Roberts \cite{Dreyer2009} showed that when
   $G$ is a tree,
   the T{\scriptsize ARGET} S{\scriptsize ET} S{\scriptsize ELECTION}
    problem can be solved in linear time for constant thresholds.
   Chen \cite{chen2009} showed that when
   the underlying graph is a tree, the problem can be solved in polynomial-time
   under a general threshold model.
   In 2010,
   Ben-Zwi, Hermelin, Lokshtanov and Newman \cite{treewidth} showed that
   for  $n$-vertices graph $G$ with treewidth bounded by $\omega$,
    the T{\scriptsize ARGET} S{\scriptsize ET} S{\scriptsize ELECTION}
    problem can be solved  in $n^{O(\omega)}$ time.
    In \cite{Dreyer2009,tori2004}, min-seed$(G,\theta)$ is computed
    for paths, cycles and for different kinds of grids $G$
    under constant threshold model.

  %
  %
   The objective of this paper is to study the
   T{\scriptsize ARGET} S{\scriptsize ET} S{\scriptsize ELECTION}
    problem when the underlying graph is a
   block-cactus graph, a chordal graph or a Hamming graph.
   In Section \ref{sect-lock-cactus}, we show that
   if $G$ is a block-cactus graph, then
   the problem can be solved in linear time,
   which generalizes Chen's result \cite{chen2009} for trees,
   and the time complexity is much better than
   the algorithm in
   \cite{treewidth} (for bounded treewidth graphs)
   when restricted to block-cactus graphs.
   In Section \ref{sect-chordal}, we show that
   if the underlying graph $G$ is a chordal graph with thresholds
   $\theta(v)\leq 2$ for each vertex $v$ in $G$,
   then the
   T{\scriptsize ARGET} S{\scriptsize ET} S{\scriptsize ELECTION}
    problem  can be solved in linear time.
    Our results partially answer  an open problem raised by
    Dreyer and Roberts at the end of their paper \cite{Dreyer2009}.
    In Section \ref{sect-hamming-graphs}, for a Hamming graph $G$
    having thresholds $\theta(v)=2$ for each vertex $v$ of $G$,
    we precisely determine an optimal target set $S$ for
    $(G,\theta)$.

  %
  %
  %
  %
   In order to study $\mbox{min-seed}(G,\theta)$
   we introduce a sequential version of the above activation process,
   called {\em sequential activation process},
   which employs the following rule instead of the parallel updating rule:
    \begin{description}
          \item[Sequential updating rule:]
          At each time step $t$,
          exactly one of
          inactive vertices that have at least $\theta(v)$
          already-active neighbors becomes active.
   \end{description}
   %
   %
   %
   %
   %
   %
  %
  %
   The proof of the following lemma is straightforward and
   so is omitted. In the sequel, Lemma \ref{seq=para}
   will be used without explicit reference to it.
   \begin{lemma}\label{seq=para}
   For a social network $(G,\theta)$,
   an optimal target set under sequential updating
   rule is also an  optimal target set under parallel updating
   rule, and vice versa.
   \end{lemma}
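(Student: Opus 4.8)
The plan is to reduce everything to a single fact: for every target set $S$, the set of vertices that end up active is the same under the parallel updating rule and under the sequential updating rule. Once this is shown, the lemma is immediate, because $[S]^G_\theta = V(G)$ then holds under one rule exactly when it holds under the other; hence $\mbox{min-seed}(G,\theta)$ does not depend on which rule is used, and so the families of optimal target sets for the two rules coincide (this also takes care of the "vice versa").

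Fix $S \subseteq V(G)$. Let $A_{\rm par}$ be the active set at the end of the parallel process, and let $A_{\rm seq}$ be the active set at the end of some run of the sequential process (a run involves a choice of eligible vertex at each step, so a priori there may be several). The first step is to prove $A_{\rm par} \subseteq A_{\rm seq}$. Let $A_t$ denote the vertices active at time $t$ under parallel updating; I would show by induction on $t$ that there is a finite partial sequential run after which all of $A_t$ is active. The base case $A_0 = S$ is trivial. For the step, each $v \in A_t \setminus A_{t-1}$ has at least $\theta(v)$ neighbors in $A_{t-1}$; by the induction hypothesis those neighbors are activated after some partial run, and since activity is monotone (an active vertex stays active), $v$ remains eligible and can be activated next. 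Any maximal sequential run extends such a partial run, so $A_{\rm par} \subseteq A_{\rm seq}$.

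For the reverse inclusion $A_{\rm seq} \subseteq A_{\rm par}$, list the vertices of $A_{\rm seq} \setminus S$ in the order $u_1, u_2, \ldots, u_m$ in which the sequential run activates them, and show by induction on $i$ that $u_i \in A_{\rm par}$. When $u_i$ gets activated, its $\theta(u_i)$ already-active neighbors lie in $S \cup \{u_1, \ldots, u_{i-1}\}$, which is contained in $A_{\rm par}$ by the induction hypothesis; since $A_{\rm par}$ is closed under the activation rule (being the terminal set of the parallel process), $u_i \in A_{\rm par}$ too. Combining the two inclusions gives $A_{\rm seq} = A_{\rm par}$ for every run, so in particular the outcome of the sequential process is independent of the choices made.

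The only delicate point — and the reason the proof is merely "straightforward" rather than wholly trivial — is phrasing the induction in the first inclusion correctly: one must carry the statement "after finitely many sequential steps, all of $A_t$ is active" rather than reasoning about a fixed run, and then observe that a maximal run cannot stop while any such step is still available. With $A_{\rm seq} = A_{\rm par}$ established, the lemma follows as indicated above.
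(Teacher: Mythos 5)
The paper gives no proof of this lemma --- it is stated as ``straightforward and so is omitted'' --- so there is nothing to compare against; your job was simply to supply the missing argument, and you have done so correctly. Your reduction is the right one: show that for every $S$ the terminal active set is the same under both rules (and independent of the sequential choices), whence the two notions of optimal target set coincide. The second inclusion ($A_{\rm seq}\subseteq A_{\rm par}$) is clean. For the first inclusion, the sentence ``any maximal sequential run extends such a partial run'' is not literally true --- a maximal run fixes its own choices and need not extend the particular partial run you construct --- but you flag this yourself and point to the correct repair. The cleanest version of that repair avoids partial runs entirely: a maximal sequential run terminates only when no inactive vertex has $\theta(v)$ active neighbors, so $A_{\rm seq}$ is closed under the activation condition, and then the induction on parallel time $t$ showing $A_t\subseteq A_{\rm seq}$ is word-for-word symmetric to your second inclusion (if $v\in A_t\setminus A_{t-1}$ had $v\notin A_{\rm seq}$, its $\theta(v)$ neighbors in $A_{t-1}\subseteq A_{\rm seq}$ would contradict closedness). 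With that rephrasing the proof is complete and is the standard confluence argument for monotone threshold processes.
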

  %
  %
   Let ${\cal P}$ be a sequential activation process on $(G,\theta)$
   starting out from a target set $S$.
   In this process, if $v_1,v_2,\ldots,v_r$ is the order that vertices
   in $[S]^G_\theta\setminus S$ are convinced,
   then $(v_1,v_2,\ldots,v_r)$ is called the {\em convinced sequence} of ${\cal P}$,
   and we say that target set $S$ has a convinced sequence $(v_1,v_2,\ldots,v_r)$
   on $(G,\theta)$.

   %
   %
   %
   %

   \section{Block-cactus graphs}
   \label{sect-lock-cactus}
  %
  %
  A vertex $v$ of a graph is called a {\em cut-vertex}
  if removal of $v$ and all edges incident to it
  increases the number of connected components.
  A {\em block} of a graph $G$ is a maximal connected
  induced subgraph of $G$ that has no cut-vertices.
  A graph $G$ is a {\em block graph}
  if every block of $G$ is a complete graph.
  A block $B$ of a graph $G$ is called
  a {\em pendent block} of $G$ if $B$ has at most one cut-vertex of $G$.
  A graph $G$ is a {\em block-cactus graph} if every block of $G$
  is either a complete graph or a cycle.
  %
  %
  %
  %
  Let $v$ be a cut-vertex of $G$. If $G-v$ consists of two disjoint
  graphs $W_1$ and $W_2$ and let $G_i$ $(i=1,2)$ be the subgraph of
  $G$ induced by $\{v\}\bigcup V(W_i)$, then $G$ is called the {\em
  vertex-sum} at $v$ of the two graphs $G_1$ and $G_2$, and denoted
  by $G=G_1 {\oplus_v} G_2$.

  %
  %
  %
  %
  %
  In the following Theorem \ref{cut-vertex}, let $G_1\oplus_v G_2$ be a social network
  equipped with threshold function $\theta$.
  %
  %
  Let $\theta_1$
  be a threshold function of $G_1-v$
  which is the same as the function $\theta$,
  except that $\theta_1(x)=\theta(x)-1$ for every $x\in N_{G_1}(v)$.
  %
  %
  Let $S_1$ be an optimal target set for $(G_1-v,\theta_1)$
  that maximizes the cardinality of the set
  $N_{G_1}(v) \cap [S_1]^{G_1}_\theta$,
  where, by slight abuse of notation,
  $\theta$ also means the threshold function of $G_1$ by
  restricting the threshold $\theta$ of $G_1\oplus_v G_2$
  to the set $V(G_1)$.
  %
  %
  Let $\theta_2$ be a threshold function of $G_2$ which is the same
  as the function $\theta$, except that
  $\theta_2(v)=\theta(v)-|N_{G_1}(v) \cap [S_1]^{G_1}_\theta|$.
  %
  %
  Let $S_2$ be an optimal target set for $(G_2,\theta_2)$.
  Now, with the definitions and notation introduced in this paragraph,
  we prove the following theorem.

  %
  %
  %
  %
  %
  \begin{theorem} \label{cut-vertex}
  $S_1\cup S_2$ is an optimal target set for
   $(G_1\oplus_v G_2,\theta)$.
  \end{theorem}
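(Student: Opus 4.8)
The plan is to prove the two inclusions separately: first that $S_1\cup S_2$ is a feasible target set, i.e.\ $[S_1\cup S_2]^{G_1\oplus_v G_2}_\theta=V(G_1\oplus_v G_2)$, and then that no target set has fewer than $|S_1|+|S_2|=|S_1\cup S_2|$ vertices (the last equality because $S_1\subseteq V(G_1-v)$ and $S_2\subseteq V(G_2)$ are disjoint). Throughout I would use the sequential activation process (Lemma~\ref{seq=para}) together with three elementary monotonicity facts, all immediate from the updating rule: enlarging a target set never shrinks the influenced set; lowering a threshold never shrinks the influenced set; and $[T]^H_\theta\subseteq[T]^G_\theta$ whenever $H$ is an induced subgraph of $G$. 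I would also record the basic correspondence, valid for $T\subseteq V(G_1-v)$: $[T\cup\{v\}]^{G_1}_\theta=V(G_1)$ if and only if $[T]^{G_1-v}_{\theta_1}=V(G_1-v)$, since putting $v$ into the target set of $G_1$ has exactly the effect of deleting $v$ and subtracting $1$ from the threshold of each vertex of $N_{G_1}(v)$.

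For feasibility I would run the process on $G=G_1\oplus_v G_2$ from $S_1\cup S_2$ in three stages. Because every vertex of $G_1-v$ has all of its $G$-neighbors inside $V(G_1)$ and $v$ is the only shared vertex, the process restricted to $G_1$ dominates the run of $(G_1,\theta)$ from $S_1$; hence after the first stage every vertex of $N_{G_1}(v)\cap[S_1]^{G_1}_\theta$ is active, so $v$ has at least $|N_{G_1}(v)\cap[S_1]^{G_1}_\theta|$ active neighbors on the $G_1$ side. With that many neighbors of $v$ permanently active, the threshold of $v$ that still must be met on the $G_2$ side is exactly $\theta_2(v)$, while every other vertex of $G_2$ has the same neighborhood and threshold in $G$ as in $G_2$; so the process restricted to $G_2$ dominates the run of $(G_2,\theta_2)$ from the feasible set $S_2$, and the second stage activates all of $V(G_2)$, in particular $v$. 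Once $v$ is active, each vertex of $N_{G_1}(v)$ has its effective threshold dropped by $1$, so the process restricted to $G_1-v$ dominates the run of $(G_1-v,\theta_1)$ from $S_1$, and the third stage activates all of $V(G_1-v)$. Since $V(G)=V(G_1-v)\cup\{v\}\cup V(G_2)$, feasibility follows.

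For optimality, let $S$ be an optimal target set for $(G,\theta)$ and set $T_1=S\cap V(G_1-v)$ and $T_2=S\cap V(G_2)$, so $|S|=|T_1|+|T_2|$ and $v\in T_2$ exactly when $v\in S$. Looking at the $G_1$-part of the run from $S$, in which $v$ is ``injected'' at the moment it becomes active (the only channel by which $G_2$ influences $G_1$), monotonicity gives $[T_1\cup\{v\}]^{G_1}_\theta=V(G_1)$, hence $[T_1]^{G_1-v}_{\theta_1}=V(G_1-v)$ and so $|T_1|\ge|S_1|$. Symmetrically $[T_2\cup\{v\}]^{G_2}_{\theta_2}=V(G_2)$ (the threshold of $v$ being irrelevant once $v$ lies in the target set), so $|T_2|\ge|S_2|-1$, improved to $|T_2|\ge|S_2|$ whenever $v\in T_2$. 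If $v\in S$, or if $|T_1|>|S_1|$, these two bounds already yield $|T_1|+|T_2|\ge|S_1|+|S_2|$. In the only remaining case, $v\notin S$ and $|T_1|=|S_1|$, the set $T_1$ is itself optimal for $(G_1-v,\theta_1)$. Consider the instant at which $v$ is activated in the run from $S$: just before it, the active vertices of $G_1$ form a subset of $[T_1]^{G_1}_\theta$, so the number $j$ of active $G_1$-neighbors of $v$ at that instant satisfies $j\le|N_{G_1}(v)\cap[T_1]^{G_1}_\theta|\le|N_{G_1}(v)\cap[S_1]^{G_1}_\theta|$, the second inequality being exactly the maximality built into the choice of $S_1$. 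Therefore $v$ was activated with at least $\theta(v)-j\ge\theta(v)-|N_{G_1}(v)\cap[S_1]^{G_1}_\theta|=\theta_2(v)$ active neighbors on the $G_2$ side, and following the run from $S$ restricted to $G_2$ (each vertex other than $v$ keeping its threshold) shows $[T_2]^{G_2}_{\theta_2}=V(G_2)$, hence $|T_2|\ge|S_2|$, and again $|T_1|+|T_2|\ge|S_1|+|S_2|$. Together with feasibility this proves the theorem.

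The step I expect to be the real obstacle is this last case of the optimality argument: showing that the amount of help $v$ can draw from the $G_1$ side in an \emph{arbitrary} optimal solution cannot exceed the quantity $|N_{G_1}(v)\cap[S_1]^{G_1}_\theta|$ that was hard-wired into $\theta_2(v)$. This is precisely why $S_1$ must be chosen among the optimal target sets of $(G_1-v,\theta_1)$ so as to maximize that intersection; once this is in hand, the remainder is routine monotonicity bookkeeping, and passing to the sequential activation process keeps the arguments about ``the moment $v$ is activated'' clean.
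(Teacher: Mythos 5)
Your proof is correct and follows essentially the same route as the paper's: feasibility via the staged cascade through $v$, and optimality by splitting an optimal set $S$ into $T_1=S\cap V(G_1-v)$ and $T_2=S\cap V(G_2)$ and exploiting the maximality of $|N_{G_1}(v)\cap[S_1]^{G_1}_\theta|$. The only organizational difference is that the paper avoids your case analysis by choosing $S$ to minimize $|S\cap V(G_1-v)|$ and forcing $|S\cap V(G_1-v)|=|S_1|$ via an exchange argument, whereas you handle $|T_1|>|S_1|$ directly with the weaker bound $|T_2|\ge|S_2|-1$; your explicit ``moment $v$ is activated'' step also makes precise an implication the paper leaves implicit.
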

  %
  %
  \begin{proof}
  Consider a sequential activation process in $(G_1\oplus_v G_2,\theta)$ starting at
  target set $S_1\cup S_2$.
  Clearly $N_{G_1}(v) \cap[S_1]^{G_1}_\theta\subseteq [S_1]^{G_1\oplus_v G_2}_\theta$,
  and hence $V(G_2)\subseteq [S_1\cup S_2]^{G_1\oplus_v G_2}_\theta$,
  which implies $V(G_1)\subseteq [S_1\cup S_2]^{G_1\oplus_v
  G_2}_\theta$.
  That is the target set $S_1\cup S_2$
  influences all vertices in $(G_1\oplus_v G_2,\theta)$.
  To prove the theorem it remains to show that $|S_1|+|S_2|=$
  min-seed$(G_1\oplus_v G_2,\theta)$.

  %
  %
  Let $S$ be an optimal target set for
  $(G_1\oplus_v G_2,\theta)$ that minimizes the size of the set $S\cap V(G_1-v)$.
  Since $(S\cap V(G_1-v))\cup \{v\}$ influences all vertices in
  $(G_1,\theta)$, we have that
  $S\cap V(G_1-v)$ influences all vertices in $(G_1-v,\theta_1)$.
  It now follows that $|S\cap V(G_1-v)|=|S_1|$ since if not,
  then we have $|S\cap V(G_1 -v)|\geq |S_1|+1$, and hence
  $(S_1+v)\cup (S\cap V(G_2))$ is an optimal target set for
  $(G_1\oplus_v G_2,\theta)$, a contradiction to the choice of $S$.
  %
  %
  %
  %
  Therefore $S\cap V(G_1-v)$ is an optimal target set for
  $(G_1-v,\theta_1)$. By the choice of $S_1$, we see that
  $|N_{G_1}(v) \cap[S\cap V(G_1-v)]^{G_1}_\theta|\leq |N_{G_1}(v)
  \cap[S_1]^{G_1}_\theta|$.
  This implies that $S_1\cup [S\cap V(G_2)]$ is an optimal target set
  for $(G_1\oplus_v G_2,\theta)$, and hence
  $S\cap V(G_2)$ influences all vertices in $(G_2,\theta_2)$,
  which implies $|S\cap V(G_2)|\geq |S_2|$.
  %
  %
  %
  %
  We conclude that $|S_1|+|S_2|=|S\cap V(G_1-v)|+|S_2|\leq
  |S\cap V(G_1-v)|+|S\cap V(G_2)|=|S|$.
  Therefore $S_1\cup S_2$ is an optimal target set for
   $(G_1\oplus_v G_2,\theta)$.
  \end{proof}
  %
  %
  %
  %
  %
  \begin{corollary}
  {\rm min-seed}$(G_1\oplus_v G_2,\theta)=$
  {\rm min-seed}$(G_1-v,\theta_1)+$
  {\rm min-seed}$(G_2,\theta_2)$.
  \end{corollary}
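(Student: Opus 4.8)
The plan is to read the Corollary off directly from Theorem~\ref{cut-vertex} and its proof. Theorem~\ref{cut-vertex} asserts that $S_1\cup S_2$ is an optimal target set for $(G_1\oplus_v G_2,\theta)$, so by definition $\mathrm{min\text{-}seed}(G_1\oplus_v G_2,\theta)=|S_1\cup S_2|$. The only thing left is to evaluate $|S_1\cup S_2|$.

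First I would observe that $S_1$ and $S_2$ are disjoint: by construction $S_1\subseteq V(G_1-v)$ and $S_2\subseteq V(G_2)$, while $V(G_1-v)\cap V(G_2)=\emptyset$ since the vertex-sum $G_1\oplus_v G_2$ identifies $G_1$ and $G_2$ only at the cut-vertex $v$, and $v\notin V(G_1-v)$. Hence $|S_1\cup S_2|=|S_1|+|S_2|$.

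Next I would invoke the defining properties of $S_1$ and $S_2$ fixed in the paragraph preceding Theorem~\ref{cut-vertex}: $S_1$ is an optimal target set for $(G_1-v,\theta_1)$, so $|S_1|=\mathrm{min\text{-}seed}(G_1-v,\theta_1)$; and $S_2$ is an optimal target set for $(G_2,\theta_2)$, so $|S_2|=\mathrm{min\text{-}seed}(G_2,\theta_2)$. Combining the three equalities gives $\mathrm{min\text{-}seed}(G_1\oplus_v G_2,\theta)=\mathrm{min\text{-}seed}(G_1-v,\theta_1)+\mathrm{min\text{-}seed}(G_2,\theta_2)$, as claimed.

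There is essentially no obstacle here; the entire content has already been carried out inside the proof of Theorem~\ref{cut-vertex}, where the chain of inequalities collapses to $|S_1|+|S_2|=|S|=\mathrm{min\text{-}seed}(G_1\oplus_v G_2,\theta)$. The only point requiring a word of care is the disjointness of $S_1$ and $S_2$, which is immediate from the definition of vertex-sum. I would therefore present the Corollary's proof as a two-line consequence of the theorem.
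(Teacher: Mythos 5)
Your proposal is correct and is precisely the intended derivation: the paper states the Corollary without proof as an immediate consequence of Theorem~\ref{cut-vertex}, whose final chain of equalities $|S_1|+|S_2|=|S|$ already contains the whole content. Your added remark on the disjointness of $S_1\subseteq V(G_1-v)$ and $S_2\subseteq V(G_2)$ is the one detail worth making explicit, and it holds for exactly the reason you give.
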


  %
  %
  %
  %
  %
  \begin{lemma} \label{K_n-C_n}
  Let $v$ be a vertex in the social network $(G,\theta)$.
  If $G\in \{K_n, C_n\}$, then an optimal target set $S$
  for $(G-v,\theta_1)$
  that maximizes the size of the set $N_{G}(v) \cap [S]^{G}_\theta$
  can be found in linear time, where $\theta_1$
  is the threshold function of $G-v$
  which is the same as the function $\theta$,
  except that $\theta_1(x)=\theta(x)-1$ for every $x\in N_{G}(v)$.
  Moreover the size of the set $N_{G}(v) \cap [S]^{G}_\theta$ can
  also be determined in linear time.
  \end{lemma}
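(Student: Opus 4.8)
The plan is to handle the two cases $G=K_n$ and $G=C_n$ separately, in each case reducing the problem to a small, easily enumerable configuration. The crucial observation is that $G-v$ is again a very simple graph: $K_n-v = K_{n-1}$, and $C_n-v = P_{n-1}$ (an $(n-1)$-path), and $\theta_1$ differs from $\theta$ only on the two (for $C_n$) or $n-1$ (for $K_n$) vertices of $N_G(v)$, where it is decremented by $1$. So I would first compute an optimal target set for $(G-v,\theta_1)$ directly, and only among all such optimal sets pick one maximizing $|N_G(v)\cap[S]^G_\theta|$.

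For $G=K_n$: every vertex of $G-v=K_{n-1}$ is a neighbor of $v$, so $\theta_1(x)=\theta(x)-1$ for all $x\in V(K_{n-1})$. In a complete graph the activation process is governed purely by a counting argument: a target set $S$ with $|S|=s$ influences all of $K_{n-1}$ iff the vertices outside $S$ can be ordered so that the $i$-th one (for $i=1,\dots,n-1-s$) has threshold at most $s+i-1$. Sorting the vertices of $V(G)\setminus\{v\}$ by threshold and greedily choosing the $s$ largest-threshold vertices as seeds yields the minimum $s$; this is linear time after the sort, and one can avoid a full sort since thresholds are bounded by $n$ (bucket/counting sort), keeping it linear. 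Once $s=\mbox{min-seed}(K_{n-1},\theta_1)$ is known, $|N_G(v)\cap[S]^G_\theta|=|\{x\in V(K_{n-1}): x\in S \text{ or } \theta(x)\le |S\cap N_G(x)|\}|$ — but since $N_G(v)=V(K_{n-1})$ and $[S]^{G-v}_{\theta_1}=V(K_{n-1})$, we in fact get $N_G(v)\cap[S]^G_\theta = V(K_{n-1})$ automatically whenever $S$ is a valid target set for $(G-v,\theta_1)$ (each $x$ is activated in $G-v$ using $\theta_1(x)=\theta(x)-1$ active neighbors among $N_G(v)$, hence has $\ge\theta(x)-1$ active neighbors in $N_G(v)$; if it is activated strictly before the last step it actually has one more). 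The delicate point is whether \emph{every} non-seed vertex gets the extra neighbor in $G$; I would argue that at most one vertex of $K_{n-1}$ can fail to reach $\theta(x)$ active neighbors within $N_G(v)$, so one compares the two candidate optimal sets (differing in which single maximal-threshold vertex is dropped) and picks the better, all in linear time.

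For $G=C_n$: here $N_G(v)=\{a,b\}$ are the two neighbors of $v$ on the cycle, and $G-v$ is the path $P=v\,$-free path with endpoints $a$ and $b$, on which $\theta_1$ agrees with $\theta$ except $\theta_1(a)=\theta(a)-1$, $\theta_1(b)=\theta(b)-1$. Since $\theta(x)\le 2$ is not assumed, thresholds on a path with $\theta(x)\ge 3$ force $x\in S$, thresholds $\theta(x)\le 0$ are free, and the interesting vertices have $\theta(x)\in\{1,2\}$; this is exactly the path/tree case already handled by Chen's algorithm \cite{chen2009}, which runs in linear time. To incorporate the "maximize $|N_G(v)\cap[S]^G_\theta|=|\{a,b\}\cap[S]^G_\theta|$" condition, I would run the path computation under four scenarios — forcing $a$ and $b$ each to be "pre-activated with one free neighbor" or not — compare the four resulting optimal sizes, and select the one that is optimal for $(G-v,\theta_1)$ while putting as many of $a,b$ in $[S]^G_\theta$ as possible; since $a$ (resp.\ $b$) lies in $[S]^G_\theta$ iff it lies in $[S]^{G-v}_{\theta_1}$, this is just a matter of bookkeeping on at most two vertices. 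All four runs are linear, so the total is linear.

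The main obstacle I expect is the $K_n$ analysis: carefully proving that among optimal target sets for $(K_{n-1},\theta_1)$ one can always recover all (or all but a controllable number) of $N_G(v)$ inside $[S]^G_\theta$, and that the greedy "keep the largest thresholds as seeds" choice is both optimal for the min-seed problem and simultaneously (after comparing the $O(1)$ tie-break candidates) optimal for the secondary maximization. The $C_n$ case is conceptually routine given Chen's path algorithm; the only care needed is the interface bookkeeping at $a$ and $b$.
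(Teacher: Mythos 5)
Your high-level plan (treat $K_n-v=K_{n-1}$ by a sorting/counting argument, treat $C_n-v=P_{n-1}$ as a path, then compare a constant number of candidates for the secondary objective) is the same as the paper's, and the min-seed half of each case is fine. The genuine gaps are in your handling of the secondary objective, i.e.\ of $|N_G(v)\cap[S]^G_\theta|$. For $K_n$, it is false that $N_G(v)\cap[S]^G_\theta=V(K_{n-1})$ for every valid target set, and also false that at most one vertex can fail: $[S]^G_\theta$ is computed in $G$ with the \emph{undecremented} thresholds and with $v$ \emph{not} seeded, so a vertex activated ``tightly'' in $(G-v,\theta_1)$ is one neighbour short in $(G,\theta)$ unless $v$ has already become active, and if $\theta(v)$ is large a whole chain of tight activations collapses. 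Concretely, in $K_4$ on $\{v,a,b,c\}$ with $\theta(a)=2$, $\theta(b)=\theta(c)=\theta(v)=3$, the set $\{c\}$ is optimal for $(K_3,\theta_1)$ but $[\{c\}]^G_\theta=\{c\}$, so two vertices of $N_G(v)$ fail. What closes this gap in the paper is an exchange observation (replacing a seed by a non-seed of larger threshold preserves optimality and weakly enlarges $[S]^G_\theta$), which certifies that the largest-threshold seed set optimizes both objectives simultaneously, followed by an explicit computation of $[S]^G_\theta$ obtained by sorting $V(G)\setminus S$ \emph{including $v$} by $\theta$ --- tracking whether $v$ itself activates is essential and is absent from your argument.

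For $C_n$ the central bookkeeping claim, ``$a\in[S]^G_\theta$ iff $a\in[S]^{G-v}_{\theta_1}$,'' is wrong: decrementing thresholds on $N_G(v)$ amounts to pretending $v$ is already active, so $[S]^{G-v}_{\theta_1}=[S\cup\{v\}]^G_\theta\setminus\{v\}$, whereas the quantity to be maximized seeds only $S$ and leaves $v$ with its own threshold. In $C_4$ with vertices $v,a,w,b$ in cyclic order, $\theta(a)=\theta(b)=\theta(v)=2$ and $\theta(w)=1$, both $\{w\}$ and $\{a\}$ are optimal for the path $(P_3,\theta_1)$ and both activate everything under $\theta_1$, yet $[\{w\}]^G_\theta=\{w\}$ while $[\{a\}]^G_\theta=\{a,w\}$; so membership in $[S]^{G-v}_{\theta_1}$ (which is all of $V(G-v)$ for \emph{every} optimal $S$) cannot distinguish the candidates, and your four scenarios have nothing to discriminate on. What is actually needed --- and what the paper supplies --- is a structural description of the whole family of optimal target sets on the path: after extracting forced seeds ($\theta_1$ exceeding degree) and propagating from threshold-$\le 0$ vertices, the remaining components are paths with thresholds in $\{1,2\}$ whose optimal seed sets follow an alternating pattern among the threshold-$2$ vertices, leaving only a constant number of extremal candidates $U$ for which $[S_1\cup U]^G_\theta$ must be evaluated in $G$ itself. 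Without that classification, running Chen's algorithm as a black box returns one optimal set with no control over the secondary objective.
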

  %
  %
  \begin{proof}
  Let ${\cal F}$ be the set of
  optimal target sets $S$ for
  $(G-v,\theta_1)$ such that $S$ maximizes the size of the set $N_G(v)\cap
 [S]_\theta ^{G}$.

  We first consider the case that $G=K_n$.
  Let $V(G-v)=\{v_1,v_2,\ldots,v_{n-1}\}$
  such that
  $\theta_1(v_1)\leq \theta_1(v_2)\leq \cdots \leq \theta_1(v_{n-1})$.
  Let $S$ be an optimal target set for $(G-v,\theta_1)$.
  Since any two vertices $v_i,v_{i+1}$ in $G-v$ have
  $N_{G-v}(v_i)=N_{G-v}(v_{i+1})$ and
  $\theta_1(v_1)\leq \cdots \leq \theta_1(v_{n-1})$,
   we give the following simple observation without proof.

   \smallskip
   \noindent
  {\bf Observation} {\em
  If $v_i\in S$ and $v_{i+1}\not\in S$,
  then $(S\setminus  \{v_i\})\cup \{v_{i+1}\}$
  is an optimal target set for $(G-v,\theta_1)$ and
  $|[(S\setminus  \{v_i\})\cup \{v_{i+1}\}]_\theta^G|\geq |[S]_\theta^G|$.
   }

  \smallskip
  Since $G$ is a complete graph, the above observation
  says that if min-seed$(G-v,\theta_1)=s$, then the target set
  $\{v_{n-1},v_{n-2},\ldots,v_{n-s}\}\in {\cal F}$.
  Moreover, such a target set has a convinced sequence
  $(v_1,v_2,\ldots,v_{n-s-1})$ on
  $(G-v,\theta_1)$. Now we are in a position to
  show that {\bf \sf Algorithm K} outputs
  an optimal target set $S$ for $(G-v,\theta_1)$ such that $S\in
  {\cal F}$.

  In steps 2-3 of the algorithm we see that
  min-seed$(G-v,\theta_1)\geq
  |\{v_i: \theta_1(v_i)>n-2$ and $1\leq i\leq n-1\}|=\ell$.
  In steps 4-8, we want to find the value $s$ such that
  $\{v_{n-1},v_{n-2},\ldots,v_{n-\ell}\}\cup
  \{v_{n-\ell-1},v_{n-\ell-2},\ldots,v_{n-s}\}\in {\cal F}$.
  During the $i$th iteration of the for loop in step 4, we have
  $\{v_1,v_2,\ldots,v_{i-1}\}\subseteq
  [\{v_{n-1},v_{n-2},\ldots,v_{n-s}\}]_{\theta_1}^{G-v}$.
  In step 6, when $\theta_1(v_i)>s+i-1$, in order to influence vertex
  $v_i$ in $(G-v,\theta_1)$ we need to add another
  $\theta_1(v_i)-(s+i-1)$ vertices to the set
  $\{v_{n-1},v_{n-2},\ldots,v_{n-s}\}$.
  Note that in step 5 we have $\theta_1(v_i)\leq n-2$.
  If follows that after step 5 and before step 6
  we have $n-(s+[\theta_1(v_i)-(s+i-1)])>i$.
  Therefore in step 7 if $n-s=i+1$,
  then it must be min-seed$(G-v,\theta_1)=s$,
  and hence $\{v_{n-1},v_{n-2},\ldots,v_{n-s}\}\in {\cal F}$.
  Clearly, the time complexity of {\bf \sf Algorithm K} takes linear
  time, where the bucket sort algorithm is used to sort vertices by
  their thresholds.

  Let $S$ be the output of the {\bf \sf Algorithm K} and $|S|=s$.
  Let
  $V(G)\setminus S=\{u_1,u_2,\ldots,u_{n-s}\}$ such that
  $\theta(u_1)\leq \theta(u_2)\leq \ldots \leq \theta(u_{n-s})$.
  Let $U=\{i: \theta(u_i)> s+i-1$ and $1\leq i\leq n-s\}$.
  We define the value $r$ by
   \begin{linenomath}
  $$
   r=\left\{%
  \begin{array}{ll}
    \min U-1, & \hbox{if $U\not= \emptyset$;} \\
    n-s, & \hbox{if $U= \emptyset$.} \\
  \end{array}%
  \right.
  $$
   \end{linenomath}
  Since $G$ is a complete graph, it can be seen that
  $[S]_\theta^G=S\cup \{u_1,u_2,\ldots,u_r\}$.
  Therefore the size of the set $N_{G}(v) \cap [S]^{G}_\theta$ can
  also be determined in linear time.

 \begin{linenomath}
 \begin{tabbing}
      xxx\=xx\=xx\=xx\= \kill
      {\bf Algorithm K} \> \> \> \\
      {\bf Begin} \> \> \> \\
     1 \> $s\leftarrow 0$; \> \> \\
     2 \> {\bf for} $i=1$ {\bf to} $n-1$
          {\bf do} {\bf if} $\theta_1(v_i)>n-2$ {\bf then}
          $s\leftarrow s+1$;\> \> \\
     3 \> $\ell \leftarrow s$; \> \>\\
     4 \>  {\bf for} $i=1$ {\bf to} $n-\ell-1$ {\bf do} \>  \> \\
     5 \> {\bf begin}  \> \> \\
     6 \> \> {\bf if} $\theta_1(v_i)>s+i-1$
             {\bf then} $s\leftarrow s+ [\theta_1(v_i)-(s+i-1)]$;\>  \\
     7 \> \> {\bf if} $n-s=i+1$
             {\bf then} {\bf STOP} and
             {\bf output} $S=\{v_{n-1},v_{n-2},\ldots,v_{n-s}\}$;\>  \\
     8 \> {\bf end} \> \>\\
     {\bf End.} \> \> \>
  \end{tabbing}
 \end{linenomath}

  Finally, consider the remaining case that $G=C_n$.
  Let $E(G)=\{vv_1, vv_{n-1}\}\cup \{v_iv_{i+1}: 1\leq i\leq n-2\}$.
  Thus $V(G-v)=\{v_1,v_2,\ldots,v_{n-1}\}$.
  Let ${\cal H}$ be the set of
  optimal target sets $S$ for
  $(G-v,\theta_1)$.
  First we consider the following {\sf Algorithm C} which computes
  $S_1$ and $S_2$.
  Clearly, $S_1\subseteq S$ for each $S\in {\cal H}$
  and $S_2\subseteq [S_1]^{G-v}_{\theta_1}$.

  \begin{linenomath}
  \begin{tabbing}
      xxx\=xx\=xx\=xx\= \kill
      {\bf Algorithm C} \> \> \> \\
      {\bf Begin} \> \> \> \\
     1 \> Find the set $S_1=\{v_i: \theta_1(v_i)> d_{G-v}(v_i)$
                                    and $ 1\leq i\leq n-1\}$. \> \> \\
     2 \> {\bf for} $i=1$ {\bf to} $n-1$
          {\bf do} {\bf if} $v_i\not\in S_1$ {\bf then}
          $\theta_1(v_i)\leftarrow \theta_1(v_i)-|N_{G-v}(v_i)\cap S_1|$;
          \> \> \\
     3 \> {\bf for} $i=1$ {\bf to} $n-2$
          {\bf do} {\bf if} $v_i\not\in S_1$ and $\theta_1(v_i)\leq 0$ {\bf then}
          $\theta_1(v_{i+1})\leftarrow \theta_1(v_{i+1})-1$;\> \> \\
     4 \> {\bf for} $i=n-1$ {\bf downto} $2$
          {\bf do} {\bf if} $v_i\not\in S_1$ and $\theta_1(v_i)\leq 0$ {\bf then}
          $\theta_1(v_{i-1})\leftarrow \theta_1(v_{i-1})-1$;\> \> \\
     5 \> Find the set $S_2=\{v_i\not\in S_1: \theta_1(v_i)\leq 0$
                                       and $ 1\leq i\leq n-1\}$. \> \> \\
     6 \> {\bf output} $S_1$ and $S_2$; \> \>\\
     7 \> {\bf output} $\theta_1$; \> \>\\
     {\bf End.} \> \> \>
  \end{tabbing}
  \end{linenomath}

  In the sequel, let $S_1,S_2,\theta_1$ be the outputs of the {\sf Algorithm
  C}.
  Now let $G-v-S_1-S_2$ have exactly $r$ connected components
  $P_1,P_2,\ldots,P_r$.
  Denote by $\ell_i$ the value $\min\{k: v_k\in V(P_i), 1\leq k\leq n-1\}$.
  We assume that $\ell_1< \ell_2 <\cdots <\ell_r$.
  For each $1\leq i\leq r$, we note that $P_i$ is a path
  and all vertices $w$ in $P_i$ have $\theta_1(w)\in \{1,2\}$,
  moreover the two end-vertices $w_1,w_2$ of $P_i$ have $\theta_1(w_1)=\theta_1(w_2)=1$.
  Let $V(P_1)=\{v_a,v_{a+1},\ldots,v_{a+b}\}$
  and $V(P_r)=\{v_c,v_{c+1},\ldots,v_{c+d}\}$
  for some integers $a,b,c,d$.

  {\bf Case 1.} $r=1$.
  Let $\{u\in V(P_1): \theta_1(u)=2\}=
                         \{v_{i_1},v_{i_2},\ldots,v_{i_q}\}$ such that
                         $i_1<i_2<\cdots <i_q$.
  If $q=0$, then $S_1\cup\{v_a\},S_1\cup\{v_{a+b}\}\in {\cal H}$.
  Clearly either $S_1\cup\{v_a\}\in {\cal F}$
  or $S_1\cup\{v_{a+b}\}\in {\cal F}$.
  It follows that we can compute $[S_1\cup\{v_a\}]_\theta^G$ and
  $[S_1\cup\{v_{a+b}\}]_\theta^G$ to find a desired set $S$ in ${\cal F}$.
  When $q=2t$ for some $t\in \mathbb{Z}^+$,
  let $U_1= \{v_{a}\}\cup \{v_{i_2}, v_{i_4},\ldots,v_{i_{2t}}\} $
  and
  $U_2= \{v_{i_1}, v_{i_3},\ldots,v_{i_{2t-1}}\}\cup \{v_{a+b}\}$.
  It can be seen that either $S_1\cup U_1\in {\cal F}$ or
  $S_1\cup U_2\in {\cal F}$.
  One can compute
  $[S_1\cup U_1]_\theta^G$ and
  $[S_1\cup U_2]_\theta^G$ to find a desired set $S$ in ${\cal F}$.
  When $q=2t-1$ for some $t\in \mathbb{Z}^+$,
  let
  $U= \{v_{i_1}, v_{i_3},\ldots,v_{i_{2t-1}}\}$.
  Clearly $S_1\cup U\in {\cal F}$.

 {\bf Case 2.} $r\geq 2$. It suffices to assume that $r=3$, that is
 $G-v-S_1-S_2$ has exactly $3$ connected components
  $P_1,P_2,P_3$ and $\ell_1< \ell_2 <\ell_3$.
  Let $\{u\in V(P_1): \theta_1(u)=2\}=
                         \{v_{i_1},v_{i_2},\ldots,v_{i_q}\}$ such that
                         $i_1<i_2<\cdots <i_q$.
  Let $\{u\in V(P_2): \theta_1(u)=2\}=
                         \{v_{j_1},v_{j_2},\ldots,v_{j_s}\}$ such that
                         $j_1<j_2<\cdots <j_s$.
  Let $\{u\in V(P_3): \theta_1(u)=2\}=
                         \{v_{k_1},v_{k_2},\ldots,v_{k_\ell}\}$ such that
                         $k_1<k_2<\cdots <k_\ell$.
  It suffices to consider the case that $q=2t,s=2t'-1, \ell=2t''$
  for some integers $t,t',t''$
  (the remaining cases follow similar arguments as above).
  let $U_1= \{v_{a}\}\cup \{v_{i_2}, v_{i_4},\ldots,v_{i_{2t}}\} $,
  $U_2= \{v_{j_1}, v_{j_3},\ldots,v_{j_{2t'-1}}\} $,
  and
  $U_3= \{v_{k_1}, v_{k_3},\ldots,v_{k_{2t''-1}}\}\cup \{v_{c+d}\}$.
  It can be seen that $S_1\cup U_1\cup U_2\cup U_3\in {\cal F}$.

  Concerning the running time of the above algorithm,
  it is clear that it is linear time.
  Which completes the proof of the lemma.
  \end{proof}

  Now Theorem \ref{block-cactus} follows from Theorem \ref{cut-vertex} and
  Lemma \ref{K_n-C_n} immediately.
  %
  %
  %
  %
  %
  %
  \begin{theorem} \label{block-cactus}
  If $G$ is a block-cactus graph, then an optimal target set for
  $(G,\theta)$ can be found in linear time.
  \end{theorem}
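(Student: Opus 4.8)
The plan is to prove Theorem~\ref{block-cactus} by induction on the number of blocks of $G$, using Theorem~\ref{cut-vertex} as the recursive engine and Lemma~\ref{K_n-C_n} to handle the base operations in linear time. First I would note that any connected graph $G$ with at least two blocks contains a pendent block $B$, i.e.\ a block with at most one cut-vertex of $G$; if $B$ has exactly one cut-vertex $v$, then $G = G_1 \oplus_v G_2$, where $G_2 = G[B]$ is the pendent block (a complete graph or a cycle, since $G$ is block-cactus) and $G_1$ is the vertex-sum of all remaining blocks. Actually it is more convenient to set this up the other way around, matching the notation of Theorem~\ref{cut-vertex}: take $G_1 = G[B]$ to be the pendent block itself and $G_2$ to be the rest of $G$ hanging off the cut-vertex $v$, so that $G = G_1 \oplus_v G_2$ with $G_1 \in \{K_n, C_n\}$.

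With this decomposition in hand, the algorithm is the obvious recursion dictated by Theorem~\ref{cut-vertex}: locate a pendent block $G_1 = G[B]$ of $G$ and its cut-vertex $v$; form $\theta_1$ on $G_1 - v$ by decrementing thresholds of $N_{G_1}(v)$; invoke Lemma~\ref{K_n-C_n} to compute, in time linear in $|V(G_1)|$, both an optimal target set $S_1$ for $(G_1-v,\theta_1)$ maximizing $|N_{G_1}(v)\cap[S_1]^{G_1}_\theta|$ and the integer $m := |N_{G_1}(v)\cap[S_1]^{G_1}_\theta|$; then form $\theta_2$ on $G_2$ by setting $\theta_2(v) = \theta(v) - m$ and recurse on the smaller network $(G_2,\theta_2)$ to obtain $S_2$. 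Theorem~\ref{cut-vertex} guarantees that $S_1 \cup S_2$ is an optimal target set for $(G,\theta)$, so by induction on the number of blocks the whole procedure returns an optimal target set. (If $G$ consists of a single block, then $G \in \{K_n, C_n\}$ and one applies a direct linear-time computation, e.g.\ a trivial variant of \textbf{\textsf{Algorithm K}} / \textbf{\textsf{Algorithm C}} with no cut-vertex to peel off; the cases of $G$ a path or cycle are already handled by \cite{Dreyer2009}.)

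For the running time I would argue as follows. A block-tree decomposition of $G$ — the list of blocks, their cut-vertices, and the incidence structure — can be computed in $O(|V(G)|+|E(G)|)$ time by the standard depth-first-search based block-cut-tree algorithm. In a block-cactus graph every block is a clique or a cycle, so $\sum_B |E(G[B])|$ is $O(\sum_B |V(G[B])|) = O(|V(G)|)$ after accounting for shared cut-vertices; in particular $|E(G)| = O(|V(G)|)$. Each recursive step removes a pendent block $B$ and spends, by Lemma~\ref{K_n-C_n}, time linear in $|V(B)|$ (the bucket sort on thresholds there is linear), plus $O(1)$ bookkeeping to update $\theta_2(v)$ and to find the next pendent block from the precomputed block-cut-tree. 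Since the blocks are edge-disjoint and each vertex lies in a bounded number of blocks only at cut-vertices, the total work telescopes to $O(|V(G)|)$, i.e.\ linear time overall.

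The only real subtlety — and the step I would treat most carefully — is making sure the reduction is applied in the correct order and that the threshold updates compose properly when a cut-vertex $v$ lies in several blocks. When $v$ is shared by blocks $B, B', B'', \ldots$, one must peel off one pendent block at a time and thread the updated threshold of $v$ through each reduction; Theorem~\ref{cut-vertex} is exactly an ``one $\oplus_v$ at a time'' statement, so this is legitimate, but the bookkeeping (and the fact that $\theta_2(v)$ can become $\le 0$, which by the conventions in Section~\ref{intro} simply forces $v$ active for free and is handled correctly by the algorithms in Lemma~\ref{K_n-C_n}) should be spelled out. Beyond that, the result is an immediate consequence of Theorem~\ref{cut-vertex} and Lemma~\ref{K_n-C_n}, as the paragraph preceding the statement already asserts.
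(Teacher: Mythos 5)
Your proposal is correct and follows exactly the route the paper intends: the paper's entire ``proof'' of Theorem~\ref{block-cactus} is the single sentence that it follows immediately from Theorem~\ref{cut-vertex} and Lemma~\ref{K_n-C_n}, and your write-up simply spells out the pendent-block recursion, the block-cut-tree preprocessing, and the telescoping time bound that the authors leave implicit. The subtlety you flag about threading updated thresholds through a cut-vertex shared by several blocks is a genuine detail the paper glosses over, and your handling of it is sound.
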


  \section{Chordal graphs}
  \label{sect-chordal}
  %
  %
  %
  %
  %
  %
  A graph is called {\em chordal} if it does not have an induced
  cycle of length greater than three.
  A vertex $v$ in $G$ is called {\em simplicial} if the subgraph of
  $G$ induced by the neighbors of $v$ is complete.
  Let $\sigma=[v_1,v_2,\ldots,v_n]$ be an ordering of $V(G)$.
  We say that $\sigma$ is a {\em perfect elimination order}
  if each $v_i$ is a
  simplicial vertex of the subgraph $G[v_i,v_{i+1},\ldots,v_n]$.
  In 1965, Fulkerson and Gross \cite{fg65}
  showed that every chordal graph has
  a perfect elimination order.
  %
  %
  %
  %
  %
  %
  In \cite{vertex-elimin,MCS-Tarjan-Yann} it was shown that
  if $G$ is a chordal graph, then there is a linear time algorithm
  which receives the adjacency sets of $G$ and
  outputs a perfect elimination order $\sigma$ of $V(G)$.
  %
  %
  %
  %
  %
  For nonadjacent vertices $u$ and $v$ of a graph $G$, a subset
  $S\subseteq V(G)$ is called a {\em $u$-$v$ separator} if the
  removal of $S$ from $G$ separates $u$ and $v$ into distinct
  connected components.
  If no proper subset of $S$ is a $u$-$v$-separator, then $S$
  is called a {\em minimal $u$-$v$ separator}.

  %
  %
  %
  %
  %
  %

 %
 %
 %
 %
 \begin{lemma}[\cite{dirac61}]
 \label{Non-adjacent-simplicial}
 Every chordal graph $G$ has a simplicial vertex.
 Moreover, if $G$ is not complete, then it
 has two nonadjacent simplicial vertices.
 \end{lemma}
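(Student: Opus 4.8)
The plan is to argue by strong induction on $n=|V(G)|$, and to derive the first assertion from the second: if $G$ is complete then every vertex is simplicial, while if $G$ is not complete the second assertion yields, in particular, one simplicial vertex. So it suffices to prove the ``moreover'' part, and I may assume $G$ is not complete and fix two nonadjacent vertices $a,b$.

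I would fix a minimal $a$-$b$ separator $S$ (one exists, with $S=\emptyset$ allowed when $a$ and $b$ already lie in different components of $G$), and let $A$ and $B$ be the connected components of $G-S$ containing $a$ and $b$. Put $H_A=G[A\cup S]$ and $H_B=G[B\cup S]$; both are chordal, being induced subgraphs of $G$, and $|V(H_A)|<n$ and $|V(H_B)|<n$ because $B$ (respectively $A$) is nonempty and omitted. The key step is the claim that $G$ has a simplicial vertex lying in $A$ and, symmetrically, one lying in $B$. Granting this, pick such vertices $x\in A$ and $y\in B$: since $A$ and $B$ are distinct components of $G-S$ there is no edge between them in $G$, so $x$ and $y$ are two nonadjacent simplicial vertices of $G$, which completes the induction.

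To establish the claim I would apply the induction hypothesis to $H_A$. A preliminary observation: if $v\in A$ then, because $S$ separates $A$ from the rest of $G$, we have $N_G(v)\subseteq A\cup S=V(H_A)$, so $v$ is simplicial in $G$ as soon as it is simplicial in $H_A$. Hence if $H_A$ is complete, any $v\in A$ does the job. If $H_A$ is not complete, the induction hypothesis supplies two nonadjacent simplicial vertices $u,w$ of $H_A$, and by the preliminary observation it is enough to show they cannot both lie in $S$. Suppose they did (so $S\neq\emptyset$). Minimality of $S$ forces each of $u$ and $w$ to have a neighbor in $A$ and a neighbor in $B$: deleting a single vertex of $S$ must create an $a$-$b$ path, and such a path necessarily enters and leaves that vertex through $A$ and through $B$. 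Therefore there is a shortest $u$-$w$ path $P_A$ all of whose internal vertices lie in $A$, and a shortest $u$-$w$ path $P_B$ all of whose internal vertices lie in $B$; each is an induced path, each has at least one internal vertex (since $uw\notin E(G)$), and they are internally disjoint. Their union is then a cycle of length at least $4$ having no chord: a chord inside $P_A$ or inside $P_B$ is ruled out because those paths are induced, a chord joining the $A$-part to the $B$-part is ruled out because $G$ has no edge between $A$ and $B$, and the chord $uw$ is excluded by hypothesis. This contradicts chordality of $G$, so one of $u,w$ lies in $A$, and the claim follows.

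The hard part is precisely this last step: manufacturing a long induced cycle out of two nonadjacent simplicial vertices that both sit inside the separator $S$. (In effect this is the same argument showing that minimal separators of chordal graphs induce cliques.) The remaining ingredients — existence of a minimal separator, the neighborhood bookkeeping that transfers simpliciality from $H_A$ to $G$, and the base case $n\le 2$ — are routine.
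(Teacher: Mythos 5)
Your proof is correct. The paper itself gives no proof of this lemma---it is stated with a citation to Dirac---so there is nothing to diverge from; what you have written is essentially Dirac's classical argument: induct via a minimal $a$-$b$ separator $S$, transfer simpliciality from $G[A\cup S]$ to $G$ for vertices of the component $A$, and rule out both simplicial vertices landing in $S$ by gluing an induced $u$-$w$ path through $A$ to one through $B$ to produce a chordless cycle of length at least $4$. All the delicate points are handled: a shortest $u$-$w$ path with interior in $A$ is induced, minimality of $S$ gives each separator vertex a neighbour in both $A$ and $B$, the absence of $A$-$B$ edges kills cross chords, and the $S=\emptyset$ case degenerates harmlessly; as you note, the same cycle construction is exactly the proof of Lemma~\ref{separator}.
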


 %
 %
 %
 %
 %
 \begin{lemma}[\cite{fg65}]
 \label{separator}
 For nonadjacent vertices $u$ and $v$ of a chordal graph $G$,
 if $S$ is a minimal $u$-$v$ separator of $G$, then
 $S$ induces a complete subgraph of $G$.
 \end{lemma}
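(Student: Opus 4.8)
The plan is to fix two arbitrary vertices $x,y\in S$ and show $xy\in E(G)$; since $x,y$ are arbitrary this yields that $G[S]$ is complete. The argument is the classical one: if $x$ and $y$ were nonadjacent, I would manufacture a chordless cycle of length at least $4$ through them, contradicting chordality of $G$.

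First I would extract the consequence of minimality. Since $S$ is a $u$--$v$ separator, $G-S$ has a connected component $C_u$ containing $u$ and a \emph{different} connected component $C_v$ containing $v$. Minimality of $S$ forces every vertex $w\in S$ to have a neighbor in $C_u$ and a neighbor in $C_v$: otherwise $S\setminus\{w\}$ would still separate $u$ from $v$, since any $u$--$v$ path whose only vertex in $S$ is $w$ would have to enter and leave $w$ through $C_u$ and $C_v$, which is impossible if $w$ misses one of them. Applying this to $x$ and to $y$, each of them has neighbors in both $C_u$ and $C_v$.

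Next, suppose for contradiction $xy\notin E(G)$. Using that $C_u$ is connected and that $x,y$ each have a neighbor in $C_u$, I would choose a shortest path $P_u$ from $x$ to $y$ inside $G[\{x,y\}\cup C_u]$; symmetrically choose a shortest path $P_v$ from $x$ to $y$ inside $G[\{x,y\}\cup C_v]$. Because $xy\notin E(G)$, neither $P_u$ nor $P_v$ is a single edge, so each has at least one internal vertex, which must lie in $C_u$ respectively $C_v$. Hence $P_u$ and $P_v$ meet in exactly $\{x,y\}$ (their interiors sit in distinct components of $G-S$), and concatenating them produces a cycle $Z$ of length at least $4$.

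The crucial step is that $Z$ is induced. A chord with both endpoints on $P_u$, or both on $P_v$, cannot exist because a shortest path is chordless. A chord joining an interior vertex of $P_u$ to an interior vertex of $P_v$ cannot exist because one lies in $C_u$ and the other in $C_v$, and there are no edges between two components of $G-S$. The only remaining candidate is $xy$ itself, excluded by assumption. So $Z$ is an induced cycle of length $\ge 4$, contradicting that $G$ is chordal; therefore $xy\in E(G)$. I expect the only point needing care to be the bookkeeping that $P_u$ and $P_v$ overlap in precisely the two vertices $x,y$ and that every potential chord of $Z$ falls into one of the three cases above — once the component structure of $G-S$ is used, everything else is immediate.
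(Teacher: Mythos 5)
Your argument is correct and complete: the use of minimality to give every vertex of $S$ a neighbor in both components $C_u$ and $C_v$, the two shortest $x$--$y$ paths through $C_u$ and $C_v$, and the three-way case analysis of possible chords together form the standard proof of this classical fact. The paper itself states Lemma~\ref{separator} without proof, citing Fulkerson and Gross, so there is no in-paper argument to compare against; your write-up would serve as a valid self-contained proof.
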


 %
 %
 %
 %
 %
 %
 \begin{lemma}\label{chordal-key-lemma}
 For $t\geq 2$, let $G$ be a $t$-connected chordal graph with $\theta(x)\leq t$ for all vertices $x$. If $S\subseteq V(G)$ induces a complete subgraph of size $t$ in $G$,
 then the target set $S$ influences all vertices in $(G,\theta)$.
 \end{lemma}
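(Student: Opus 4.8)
The plan is to induct on $|V(G)|$, using the fact that a chordal graph that is not complete contains two nonadjacent simplicial vertices (Lemma~\ref{Non-adjacent-simplicial}). If $G$ is itself complete, then since $G$ is $t$-connected we have $|V(G)|\geq t+1$, and $S$ is a set of $t$ active vertices all of whose nonmembers are adjacent to every vertex of $S$; thus every vertex outside $S$ sees $t\geq\theta(x)$ active neighbors and becomes active in one step. So assume $G$ is not complete, and pick a simplicial vertex $w\notin S$ (this is possible: $G$ has two nonadjacent simplicial vertices, and $S$ induces a complete subgraph, so at most one simplicial vertex can lie in $S$). Set $G'=G-w$ with the restricted thresholds $\theta'=\theta|_{V(G')}$.

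The first key step is to check that $G'$ inherits the hypotheses of the lemma, with the \emph{same} $t$. We need $G'$ to be $t$-connected. This is where I expect the real work to be: deleting one vertex from a $t$-connected graph generally only leaves a $(t-1)$-connected graph. The rescue is that $w$ is simplicial, so $N_G(w)$ induces a clique; if $G'$ had a separator $T$ of size $<t$, then $T$ would also separate $G$ unless $w$ bridges the two sides, i.e.\ unless $N_G(w)$ meets both sides of $G'-T$ — but $N_G(w)\setminus T$ is a clique, hence lies entirely in one component of $G'-T$, contradiction. (One must also handle the degenerate case $|V(G')|=t$, where "$t$-connected" should be read as "$G'=K_t$"; since $G$ is $t$-connected with $|V(G)|=t+1$, $G=K_{t+1}$, contradicting non-completeness, so this case does not arise here.) Chordality of $G'$ is immediate since induced subgraphs of chordal graphs are chordal, and $\theta'\leq t$ is inherited.

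The second step applies the induction hypothesis: $S$ (which induces a complete subgraph of size $t$ in $G'$, as $w\notin S$) influences all of $V(G')$ in $(G',\theta')$. Running this same sequential activation process inside $G$ — legitimate because every vertex $x\neq w$ has $N_{G'}(x)\subseteq N_G(x)$ and $\theta'(x)=\theta(x)$, so any activation valid in $G'$ is valid in $G$ — shows $V(G)\setminus\{w\}\subseteq[S]^G_\theta$. Finally, $w$ has $d_G(w)\geq t$ (by $t$-connectivity) and all of $N_G(w)$ is now active, so $w$ sees $\geq t\geq\theta(w)$ active neighbors and becomes active. Hence $[S]^G_\theta=V(G)$, completing the induction.

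One subtlety to flag: in the inductive step I silently used that the process that activates $V(G')$ in $G'$ can be replayed verbatim in $G$; this needs the remark (already in the paper's setup, via the monotonicity inherent in the activation rule) that enlarging the graph by adding edges/vertices incident to already-processed vertices never invalidates a convinced sequence. I would state this as a one-line observation rather than belabor it. The only genuinely delicate point is the connectivity argument for $G'$, and the simpliciality of $w$ is exactly the hypothesis that makes it go through.
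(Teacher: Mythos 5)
Your proof is correct and is essentially the paper's argument: the paper iteratively peels off simplicial vertices outside $S$ (using Lemma~\ref{Non-adjacent-simplicial} to find one and preserving $t$-connectivity at each step) until a complete graph containing $S$ remains, then activates the peeled vertices in reverse order, which is exactly your induction unrolled. Your treatment of the connectivity-preservation step via the simpliciality of $w$ is in fact more explicit than the paper's, but the underlying reasoning is the same.
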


 \begin{proof}
 Without loss of generality, we may assume that $G$ is not complete.
 Let $|V(G)|=n$.
 To prove this theorem, we want to demonstrate a sequence of distinct
 vertices $[v_1,v_2,\ldots,v_{\ell}] $
 in $G$ such that
 $G- \{v_1,v_2,\cdots, v_\ell\}$ is a complete graph that contains all vertices
 of $S$.
 Moreover, for $1\leq i\leq \ell$, vertex $v_i$ is adjacent to at least
 $t$ vertices in  the graph $G- \{v_1,v_2,\cdots, v_i\}$.
 It is clear that if such a sequence exists, then the target set $S$
 influences all vertices in $(G,\theta)$,
 since $\theta(x)\leq t$ for all vertices $x$ in $G$.

 To construct such a sequence, by Lemma \ref{Non-adjacent-simplicial},
 we can pick a simplicial vertex $v_1$ of $G$
 such that $v_1\not\in S$.
 %
 %
 Note that $G-v_1$ is $t$-connected, since otherwise there is a set $U\subseteq V(G-v_1)$ with $|U|\leq t-1$ such that $G-v_1-U$ is disconnected.
 By Lemma \ref{Non-adjacent-simplicial} it follows that $G-U$ is disconnected,
 a contradiction to $G$ is $t$-connected.
 Next, if $G-v_1$ is not complete,
 then by Lemma \ref{Non-adjacent-simplicial} again,
 we can pick a simplicial vertex $v_2$ of $G-v_1$
 such that $v_2\not\in S$.
 It can also be seen that  $G-v_1-v_2$ is $t$-connected.
 If we continue in this way, we eventually have
 a desired sequence of distinct vertices $[v_1,v_2,\ldots,v_\ell]$ such that
 the graph $G- \{v_1,v_2,\cdots, v_i\}$ is $t$-connected for each
 $i\in \{1,2,\ldots,\ell-1\}$ and $G- \{v_1,v_2,\cdots, v_\ell \}$ is a complete
 graph that contains all vertices of $S$. Which completes the proof of the lemma.
 \end{proof}

  %
  %
  %
  %
  %
  \begin{theorem} \label{chordal}
  Suppose that $G$ is a $t$-connected chordal graph with $t\geq 2$.
  $(a)$ {\rm min-seed}$(G,t)=t$.
  $(b)$ If $\theta(x)\leq t$ for each vertex $x$ of $G$ and
  $\theta(v)< t$ for some vertex $v$.
  then {\rm min-seed}$(G,\theta)<t$.
  \end{theorem}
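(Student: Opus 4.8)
The plan is to reduce both parts to a single structural fact: \emph{in a $t$-connected chordal graph $G$, every vertex lies in a clique of size $t$}. To prove this, fix a vertex $v$ and delete simplicial vertices one at a time, always choosing one different from $v$. While the current graph is not complete, Lemma~\ref{Non-adjacent-simplicial} supplies two nonadjacent simplicial vertices, at least one of which is not $v$, so such a choice is available; an induced subgraph of a chordal graph is again chordal, and deleting a simplicial vertex from a $t$-connected chordal graph keeps it $t$-connected, by the same argument used in the proof of Lemma~\ref{chordal-key-lemma} (a cut of size $<t$ of the reduced graph would, since the deleted vertex sees only a clique, already be a cut of $G$). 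Since the vertex set strictly shrinks, the process stops at a complete graph $H$ that still contains $v$ and is still $t$-connected; a $t$-connected graph has at least $t+1$ vertices, so $v$ together with any $t-1$ other vertices of $H$ forms a clique of size $t$ of $G$ containing $v$.

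Granting the claim, part $(a)$ is immediate. For the upper bound, let $S$ be any clique of size $t$; Lemma~\ref{chordal-key-lemma} applied to the constant threshold $\theta\equiv t$ gives $[S]^{G}_{\theta}=V(G)$, so ${\rm min\mbox{-}seed}(G,t)\le t$. For the lower bound, if $|S|\le t-1$ then at time $0$ every inactive vertex has at most $t-1<t$ active neighbors, so nothing is ever activated; since $G$ has at least $t+1>|S|$ vertices, such an $S$ cannot influence $V(G)$, whence ${\rm min\mbox{-}seed}(G,t)\ge t$. For part $(b)$, choose a vertex $v$ with $\theta(v)<t$ and, by the claim, a clique $Q$ of size $t$ containing $v$; set $S=Q\setminus\{v\}$, so $|S|=t-1$. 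Running the activation process from $S$, the vertex $v$ has its $t-1\ge\theta(v)$ neighbors in $S$ already active and so becomes active, giving $Q\subseteq[S]^{G}_{\theta}$; since $\theta(x)\le t$ for all $x$, Lemma~\ref{chordal-key-lemma} then yields $[Q]^{G}_{\theta}=V(G)$, hence $[S]^{G}_{\theta}=V(G)$ and ${\rm min\mbox{-}seed}(G,\theta)\le t-1<t$.

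I expect the structural claim to be the only real difficulty, the key point being that part $(b)$ needs a clique of size $t$ through the \emph{particular} low-threshold vertex $v$ rather than just some clique of size $t$; checking that simplicial-vertex deletion can always avoid $v$ while preserving $t$-connectivity is the routine but essential bookkeeping. Everything else follows directly from Lemma~\ref{chordal-key-lemma}.
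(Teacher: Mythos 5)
Your proof is correct, and while it shares the paper's overall skeleton (exhibit a $t$-clique, for part $(b)$ a $t$-clique through the low-threshold vertex $v$, seed all but $v$, then invoke Lemma~\ref{chordal-key-lemma}), you reach that clique by a genuinely different route. The paper splits part $(b)$ into two cases: if $v$ is universal it takes a simplicial vertex of $G-v$ to get a $(t-1)$-clique adjacent to $v$, and otherwise it takes a minimal $v$-$u$ separator contained in $N_G(v)$, which has size at least $t$ by $t$-connectivity and is a clique by Lemma~\ref{separator}. You instead prove the single structural fact that every vertex of a $t$-connected chordal graph lies in a $(t+1)$-clique, by iterated deletion of simplicial vertices avoiding $v$; this reuses exactly the connectivity-preservation argument already needed for Lemma~\ref{chordal-key-lemma} (and states its justification more accurately than the paper does there --- the point is that the deleted vertex's neighborhood is a clique, not Lemma~\ref{Non-adjacent-simplicial}), dispenses with Lemma~\ref{separator} entirely, unifies the two cases of part $(b)$, and also supplies the clique needed for part $(a)$. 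What the paper's separator argument buys is brevity --- it produces the clique through $v$ in one step without an inductive elimination process --- but your version is more self-contained and arguably cleaner. The remaining steps (the counting lower bound in $(a)$, the activation of $v$ from its $t-1$ seeded clique neighbors, and the monotonicity $[Q]^G_\theta\subseteq[S]^G_\theta$ once $Q\subseteq[S]^G_\theta$) match the paper and are all sound.
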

  \begin{proof}
  (a) By Lemma \ref{Non-adjacent-simplicial},
  the fact that $G$ is a $t$-connected chordal graph implies that
  $G$ contains a complete subgraph $H$ of $t$ vertices.
  By Lemma \ref{chordal-key-lemma}, we see that the target set $V(H)$
  influences all vertices in the social network $(G,t)$, and hence min-seed$(G,t)\leq
  t$.
  Note that an inactive vertex $v$ in $(G,t)$
  become active only if $v$ has at least $t$ already-active
  neighbors. It follows that min-seed$(G,t)\geq t$, which completes
  the proof of part (a).

  (b)
  If $v$ is adjacent to all other vertices of $G$,
  then, by Lemma \ref{Non-adjacent-simplicial},
  $G-v$ contains a complete subgraph $H$ of size $t-1$, since $G-v$
  has a simplicial vertex and $G$ is $t$-connected.
  It follows that, by Lemma \ref{chordal-key-lemma}, the target set $V(H)$
  influences all vertices in $(G,\theta)$, and hence
  min-seed$(G,\theta)< t$.
  Now consider the case that $v$
  is not adjacent to some vertex $u$ in $G$.
  Clearly there is a minimal $v$-$u$ separator $S$
  such that $v$ adjacent to all vertices
  of $S$. Note that $|S|\geq t$, since $G$ is $t$-connected.
  Let $S'\subseteq S$ with $|S'|=t-1$.
  By Lemma \ref{separator}, $S'\cup \{v\}$
  induces a complete subgraph of size $t$ in $G$.
  It follows that, by Lemma \ref{chordal-key-lemma} and the fact that
  $\theta(v)\leq t-1$, the target set $S'$ influences all vertices of
  $(G,\theta)$. We conclude that min-seed$(G,\theta)< t$.
  \end{proof}

 %
 %
 %
 %
 %
 %
 \begin{corollary}
 \label{chordal-corollary}
  Let $G$ be a $2$-connected chordal graph with thresholds
  $\theta(v)\leq 2$ for every vertex $v$ of $G$.
  Then {\rm min-seed}$(G,\theta)=2$ if and only if
  $\theta(v)=2$ for each vertex $v$ of $G$.
 \end{corollary}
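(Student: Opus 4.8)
The plan is to obtain this corollary as the special case $t=2$ of Theorem~\ref{chordal}, treating the two implications separately.

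\emph{Sufficiency.} Suppose $\theta(v)=2$ for every vertex $v$ of $G$. Then the social network $(G,\theta)$ coincides with $(G,2)$, and since $G$ is a $2$-connected chordal graph, Theorem~\ref{chordal}(a) with $t=2$ applies and yields ${\rm min\text{-}seed}(G,\theta)={\rm min\text{-}seed}(G,2)=2$.

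\emph{Necessity.} I would argue by contraposition. Assume $\theta(v)\neq 2$ for some vertex $v$; since $\theta(v)\le 2$ by hypothesis, this forces $\theta(v)<2$. Now every hypothesis of Theorem~\ref{chordal}(b) is met with $t=2$: the graph $G$ is $2$-connected chordal, $\theta(x)\le 2$ for every vertex $x$, and $\theta(v)<2$ for some vertex $v$. Hence ${\rm min\text{-}seed}(G,\theta)<2$, and in particular ${\rm min\text{-}seed}(G,\theta)\neq 2$.

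Combining the two halves gives the stated equivalence. There is essentially no obstacle here: all the substantive work has already been carried out in Lemma~\ref{chordal-key-lemma} and Theorem~\ref{chordal}, and the only thing that requires a moment's care is the bookkeeping that ``$2$-connected'' and ``thresholds at most $2$'' are precisely the instances ``$t$-connected'' and ``$\theta(x)\le t$'' of that theorem with the admissible value $t=2\ge 2$.
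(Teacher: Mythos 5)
Your proof is correct and follows exactly the route the paper intends: the corollary is stated without proof as an immediate consequence of Theorem~\ref{chordal}, and your two directions are precisely instances of parts (a) and (b) of that theorem with $t=2$.
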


 In the sequel, for convenience,
 we write ${\cal S}\propto  (G,\theta)$ to mean that
 the target set ${\cal S}$ influences all vertices in $(G,\theta)$.
  The following simple fact, which we state without proof, will be used implicitly and frequently in
 Lemma \ref{main-lemma-chordal-algorithm}.
 \begin{claim}
 \label{S-plus-v}
 Let $v$ be a vertex in the social network $(G,\theta)$ and
 let $\theta_1$ be the
 threshold function of $G-v$ which is the same as the function
 $\theta$, except that $\theta_1(x)=\theta(x)-1$ for every $x\in
 N_G(v)$.
 Then for $S\subseteq V(G-v)$, we have $S \propto (G-v,\theta_1)$
 if and only if $S\cup \{v\}  \propto (G,\theta)$.
 \end{claim}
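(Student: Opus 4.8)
The plan is to establish the stronger equality
$[S\cup\{v\}]^G_\theta=\{v\}\cup[S]^{G-v}_{\theta_1}$,
from which the stated equivalence is immediate: since $V(G)=V(G-v)\cup\{v\}$, the right-hand side equals $V(G)$ exactly when $[S]^{G-v}_{\theta_1}=V(G-v)$, i.e.\ exactly when $S\propto(G-v,\theta_1)$. To prove the equality I would run the two activation processes side by side and show, by induction on the time step $t$, that the set $A_t$ of vertices active at time $t$ in $(G,\theta)$ starting from $S\cup\{v\}$ equals $\{v\}\cup B_t$, where $B_t$ is the set of vertices active at time $t$ in $(G-v,\theta_1)$ starting from $S$.

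The base case $t=0$ is clear, as $A_0=S\cup\{v\}$ and $B_0=S$. For the inductive step the key is a simple counting identity: once active, $v$ stays active, so for any $u\in V(G-v)$ the already-active neighbors of $u$ in $G$ at time $t$ are precisely the already-active neighbors of $u$ in $G-v$ at time $t$, together with $v$ when $u\in N_G(v)$. Hence $u$ has at least $\theta(u)$ already-active neighbors in $G$ if and only if it has at least $\theta(u)-1=\theta_1(u)$ already-active neighbors in $G-v$ when $u\in N_G(v)$, and at least $\theta(u)=\theta_1(u)$ of them when $u\notin N_G(v)$. Applying the parallel updating rule, a vertex $u\in V(G-v)$ is therefore added at step $t+1$ on the $G$-side exactly when it is added at step $t+1$ on the $(G-v)$-side, so $A_{t+1}=\{v\}\cup B_{t+1}$. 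Since $V(G)$ is finite, both processes stabilize after finitely many steps, and passing to the limit gives $[S\cup\{v\}]^G_\theta=\{v\}\cup[S]^{G-v}_{\theta_1}$.

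I do not expect a real obstacle here; the argument is bookkeeping. The only point worth a line of care is the degenerate case where $\theta(x)=1$ for some $x\in N_G(v)$, so that $\theta_1(x)=0$. There the convention recalled in Section~\ref{intro}---an inactive vertex with nonpositive threshold becomes active at the next step---must be used on the $(G-v)$-side, and one checks it is consistent with the $G$-side, where such an $x$ has threshold $1$ and the single active neighbor $v$, hence also becomes active. With this checked, the counting identity and the induction carry through unchanged (and, if one prefers, Lemma~\ref{seq=para} lets one phrase the same argument with the sequential process), and the claim follows.
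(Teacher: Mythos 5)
Your proof is correct. The paper states Claim~\ref{S-plus-v} explicitly \emph{without proof} (``The following simple fact, which we state without proof\dots''), so there is no argument of the authors' to compare against; your step-by-step induction establishing the stronger equality $[S\cup\{v\}]^G_\theta=\{v\}\cup[S]^{G-v}_{\theta_1}$, including the check that the $\theta_1(x)\leq 0$ convention on the $(G-v)$-side matches the behaviour of $x\in N_G(v)$ with $\theta(x)=1$ on the $G$-side, is exactly the routine bookkeeping the authors evidently had in mind.
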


 We state Lemma \ref{main-lemma-chordal-algorithm} using the
 same notation and conventions as in Claim \ref{S-plus-v}.
 \begin{lemma}
 \label{main-lemma-chordal-algorithm}
 Let $G$ be a $2$-connected chordal graph with thresholds
 $\theta(u)\le 2$ for every $u\in V(G)$.
 For a vertex $v$ in $G$, let ${\cal F}$ be the set of
 optimal target sets $S$ for
 $(G-v,\theta_1)$ such that $S$ maximizes the size of the set $N_G(v)\cap
 [S]_\theta ^{G}$.
  Let
  $I=\{u\in V(G-v): \theta_1(u)\leq 0\}$,
  $J=\{u\in V(G): \theta(u)<2\}$ and
 $J_0=\{u\in V(G): \theta(u)\leq 0\}$.
 Let ${\cal P}_1$ (resp. ${\cal Q}_1$) be the property that
 there are two distinct vertices $x,y\in I$ (resp. $x,y\in J_0$)
 such that   $d_{G}(x,y)\leq 2$.
 Let ${\cal P}_2$ (resp. ${\cal Q}_2$) be the property
 that there is an edge
 $xy$ in $G-v$ (resp. $G$) with $x\in I$ (resp. $x\in J_0$)
 and $\theta_1(y)=1$ (resp. $\theta(y)=1$).
 Then we have:

 $(a)$ If $I\cap N_G(v)\not = \emptyset$, then $\emptyset \in {\cal F}$.

 $(b)$ If $I\cap N_G(v) = \emptyset$ and ${\cal P}_1$ holds,
       then $\emptyset \in {\cal F}$.

 $(c)$ If $I\cap N_G(v) = \emptyset$ and ${\cal P}_2$ holds,
    then $\emptyset \in {\cal F}$.

    $(d)$ If $J=\emptyset$, then
 $\{x\}\in {\cal F}$ and  $[\{x\}]_\theta ^{G}=\{x\}$
 for every $x\in N_G(v)$.

 $(e)$ If $J\not=\emptyset$, $I\cap N_G(v) = \emptyset$
 and neither  ${\cal P}_1$ nor  ${\cal P}_2$ holds,
 then $\{x\}\in {\cal F}$ and $[\{x\}]_\theta^G=V(G)$
  for every vertex $x$ adjacent to some
 vertex $w\in J$.

 $(f)$ If ${\cal Q}_1$ or ${\cal Q}_2$ holds,
       then $[\emptyset]_\theta^G=V(G)$.

 $(g)$ If neither ${\cal Q}_1$ nor ${\cal Q}_2$ holds,
       then $[\emptyset]_\theta^G=J_0$.
 \end{lemma}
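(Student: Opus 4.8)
The plan is to verify the seven statements $(a)$–$(g)$ by a case analysis that exploits the structure given by Claim~\ref{S-plus-v}, Corollary~\ref{chordal-corollary}, and the $2$-connectivity of $G$. The unifying idea is this: since $\theta_1$ arises from $\theta$ by lowering thresholds on $N_G(v)$ by $1$, a vertex $u\in I$ has $\theta_1(u)\le 0$ and hence activates automatically (in $G-v$) at the next time step; and if $G-v$ is itself $2$-connected with all $\theta_1\le 2$, then by Corollary~\ref{chordal-corollary} and Theorem~\ref{chordal} the presence of even one vertex of threshold $\le 1$ (after accounting for auto-activation) forces $\mathrm{min\text{-}seed}(G-v,\theta_1)<2$, in fact often $0$. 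So the heart of the argument is: translate each hypothesis into a statement about which vertices of $G-v$ auto-activate, then show that this already triggers a full cascade, so that $\emptyset$ (or a singleton) is optimal.

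For $(a)$–$(c)$ I would argue that $\emptyset\propto(G-v,\theta_1)$, i.e.\ $\mathrm{min\text{-}seed}(G-v,\theta_1)=0$, which makes $\emptyset$ trivially optimal and (since $\emptyset$ maximizes nothing to worry about, $N_G(v)\cap[\emptyset]^G_\theta$ is as large as the cascade allows) places $\emptyset$ in $\mathcal F$. In $(a)$, a vertex $u\in I\cap N_G(v)$ auto-activates in $G-v$; being in $N_G(v)$ it has lost a threshold unit, and once it is active its neighbors see an extra active vertex, so I would show the cascade propagates through the (chordal, $2$-connected) structure using the same simplicial-vertex peeling as in Lemma~\ref{chordal-key-lemma}. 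In $(b)$, two vertices $x,y\in I$ at distance $\le 2$ auto-activate and, being close, jointly supply two active neighbors to a common vertex (or to the vertex on a path between them), again igniting the cascade. In $(c)$, an edge $xy$ with $x\in I$ and $\theta_1(y)=1$: $x$ auto-activates, then $y$ activates (it needs only one active neighbor), and from two adjacent active vertices the cascade spreads. The technical content in each case is to invoke Lemma~\ref{chordal-key-lemma} (or its proof) on $G-v$, using that a pair of adjacent active vertices behaves like a complete subgraph of size $2$.

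For $(d)$: if $J=\emptyset$ then every vertex of $G$ has $\theta=2$, so by Corollary~\ref{chordal-corollary} $\mathrm{min\text{-}seed}(G,\theta)=2$; by Claim~\ref{S-plus-v}, $\mathrm{min\text{-}seed}(G-v,\theta_1)=1$, and for $x\in N_G(v)$, $\theta_1(x)=1$ while all other $\theta_1$ values are $2$, so no further vertex can activate from a single active $x$ with only one active neighbor available — hence $[\{x\}]^G_\theta=\{x\}$ — but $\{x\}$ is an optimal (size-$1$) target set, and since every singleton of $N_G(v)$ has the same one-element closure, $\{x\}$ maximizes $N_G(v)\cap[\{x\}]^G_\theta=\{x\}$, so $\{x\}\in\mathcal F$. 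For $(e)$: here $G-v$ has no auto-activating vertex ($I\cap N_G(v)=\emptyset$ and the failure of $\mathcal P_1,\mathcal P_2$ rules out the propagation mechanisms of $(b),(c)$), so $\mathrm{min\text{-}seed}(G-v,\theta_1)\ge 1$; but $J\ne\emptyset$ gives a vertex $w$ with $\theta(w)<2$, and a neighbor $x$ of $w$: activating $x$ lets $w$ activate, then from the adjacent pair $\{x,w\}$ apply Lemma~\ref{chordal-key-lemma} to get $[\{x\}]^G_\theta=V(G)$, so $\{x\}$ is optimal and in $\mathcal F$. Finally $(f)$ and $(g)$ describe $[\emptyset]^G_\theta$ directly: vertices in $J_0$ auto-activate; $(f)$'s hypotheses $\mathcal Q_1$ or $\mathcal Q_2$ are exactly the $G$-analogues of $(b),(c)$ and give a full cascade, so $[\emptyset]^G_\theta=V(G)$; when neither holds, no non-$J_0$ vertex ever sees two active neighbors (the only active vertices are the isolated-in-influence ones of $J_0$), so $[\emptyset]^G_\theta=J_0$ exactly.

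The main obstacle I anticipate is $(e)$ and $(g)$: one must prove that when $\mathcal P_1,\mathcal P_2$ (resp.\ $\mathcal Q_1,\mathcal Q_2$) \emph{fail}, the cascade genuinely stalls — i.e.\ no chain of activations can sneak around. This requires showing that two active vertices at distance $\ge 3$ with all surviving thresholds equal to $2$ can never combine to push a third vertex over threshold, which uses chordality (no long induced cycles to create shortcuts) together with a careful induction on the cascade steps. The positive directions $(a)$–$(d),(f)$ are more routine once one sees that an adjacent active pair plays the role of the size-$2$ complete seed in Lemma~\ref{chordal-key-lemma}.
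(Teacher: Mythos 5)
Your overall strategy coincides with the paper's: identify the auto-activating vertices, show that in cases (a)--(c), (e), (f) they yield an adjacent pair of active vertices that ignites a full cascade via Lemma~\ref{chordal-key-lemma} with $t=2$, and handle (d) and the negative directions of (e), (g) by threshold counting. However, one step in your plan would fail as written: you propose to ``invoke Lemma~\ref{chordal-key-lemma} (or its proof) on $G-v$.'' That lemma requires the ambient graph to be $2$-connected, and $G-v$ need not be: take $G$ to be the fan in which $v$ is joined to every vertex of the path $a_1a_2a_3$; then $G$ is $2$-connected and chordal, but $G-v=P_3$ has a cut vertex. Moreover, in case (a) the auto-activated vertex $u\in I\cap N_G(v)$ is a \emph{single} active vertex of $G-v$, not a $K_2$, so even if $G-v$ were $2$-connected you could not start the simplicial peeling; your remark that ``its neighbors see an extra active vertex'' only activates neighbours of threshold $\le 1$, not generic threshold-$2$ vertices. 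The paper's fix, which you should adopt throughout, is never to argue inside $G-v$: by Claim~\ref{S-plus-v} it suffices to show $\{v\}\propto(G,\theta)$ (resp.\ $\{v,x\}\propto(G,\theta)$), and there the seed vertex $v$ itself together with the auto-activated $u$ (adjacent to $v$, with $\theta(u)\le 1$) forms the required edge inside the genuinely $2$-connected graph $G$. With this lift, (a), (b), (c) and (e) become one-line applications of Lemma~\ref{chordal-key-lemma}.

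Two smaller points. First, the ``main obstacle'' you anticipate for (e) and (g) --- that the cascade genuinely stalls when ${\cal P}_1,{\cal P}_2$ (resp.\ ${\cal Q}_1,{\cal Q}_2$) fail --- needs neither chordality nor an induction on cascade steps: a threshold-$2$ vertex activates only when two already-active vertices share it as a common neighbour, which forces those two to be at distance $\le 2$ and yields ${\cal P}_1$ (or ${\cal Q}_1$); a threshold-$1$ vertex activated by a single active neighbour yields ${\cal P}_2$ (or ${\cal Q}_2$); and the hypothesis $I\cap N_G(v)=\emptyset$ rules out activation of a neighbour of $v$ by $v$ alone. The paper dispatches this direction with exactly that contrapositive. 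Second, in (d) the certification that $\{x\}\in{\cal F}$ for $x\in N_G(v)$ rests on the observation that $[\{y\}]^G_\theta=\{y\}$ for \emph{every} vertex $y$ when all thresholds equal $2$, so the maximum attainable value of $|N_G(v)\cap[S]^G_\theta|$ over optimal target sets is $1$, achieved precisely when the singleton lies in $N_G(v)$; you should state this explicitly rather than only for singletons drawn from $N_G(v)$.
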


 \begin{proof}
 (a) Let $w\in I\cap N_G(v)$.
  By the facts $vw\in E(G)$,
   $\theta(w)\leq 1$ and by
   Lemma \ref{chordal-key-lemma}, we see that
    $\{v\}\propto (G,\theta)$, and hence
  $\emptyset\propto (G-v,\theta_1)$.

 (b)
  Clearly
  $\theta(x)\leq 0$ and $\theta(y)\leq 0$.
  Since $d_{G}(x,y)\leq 2$,
  either $xy\in E(G)$ or  $x,y\in N_{G}(z)$ for some vertex $z$. In both cases, by Lemma \ref{chordal-key-lemma},
  we see that $[\emptyset]_\theta^G=V(G)$.
  Thus by Claim \ref{S-plus-v},
  $\emptyset\propto (G-v,\theta_1)$.

  (c)
  Since $x\not\in N_G(v)$,
  it can be seen that $[\{v\}]_{\theta}^{G}\supseteq \{x,y,v\}$.
  By Lemma \ref{chordal-key-lemma},
   it follows that $\{v\}\propto (G,\theta)$, and hence
  $\emptyset\propto (G-v,\theta_1)$.

  (d)
 For each $x\in N_G(v)$,
 by Lemma \ref{chordal-key-lemma}, $\{x,v\}\propto (G,\theta)$, and
 hence $\{x\}\propto (G-v,\theta_1)$. Clearly min-seed$(G-v,\theta)\geq 1$. It follows that
 $\{x\}$ is an optimal target set for $(G-v,\theta_1)$.
 Since $\theta(u)=2$ for each  $u\in V(G)$, we have
 $|[S]_\theta ^{G}|=1$ for any optimal target set $S$
 for $(G-v,\theta_1)$. Therefore $\{x\}\in {\cal F}$
 and  $[\{x\}]_\theta ^{G}=\{x\}$.

  (e) Note that $I\cap N_G(v) = \emptyset$
  implies that $\theta(y)=2$ for each
  $y\in N_G(v)$. We claim that $\{v\}$ can not influence all
  vertices in $(G,\theta)$.
  If not, then it must be that
  either ${\cal P}_1$ or  ${\cal P}_2$ holds, a
  contradiction. Thus {\rm min-seed}$(G-v,\theta_1)\geq1$.
  Now let $w\in J$ and $x\in N_G(w)$. Note that $x\not=v$.
  Clearly $[\{x\}]_\theta^G\supseteq \{x,w\}$ and hence,
  by Lemma \ref{chordal-key-lemma}, $\{v,x\}\propto (G,\theta)$.
  It follows that, by Claim \ref{S-plus-v}, $\{x\}\propto (G-v,\theta_1)$.
  Moreover, we have $[\{x\}]_\theta^G=V(G)$, and hence
  $\{x\}\in {\cal F}$. This completes the proof of (e).

  Finally, by similar arguments as in the proofs of (c), (d) and (e),
  it is easy to prove (f) and (g), so we omit the proofs of (f) and
  (g).
 \end{proof}

 Using the same notation
 and conventions
 as in Claim \ref{S-plus-v}
 and Lemma \ref{main-lemma-chordal-algorithm},
 we state and prove the following theorem.

 \begin{theorem}
 If $G$ is a chordal graph with thresholds $\theta(x)\le
 2$ for each vertex $x$ in $G$, then an optimal target set for $(G,\theta)$
 can be found in linear time.
 \end{theorem}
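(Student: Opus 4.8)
\noindent The plan is to decompose $G$ along its cut-vertices and reassemble an optimal target set using Theorem~\ref{cut-vertex}. Since induced subgraphs of chordal graphs are chordal, every block of $G$ is a chordal graph, and each block is either an edge or (having at least three vertices) $2$-connected; so every $2$-connected block, carrying thresholds $\le 2$, falls within the scope of Lemma~\ref{main-lemma-chordal-algorithm} (and, for the root block, of Theorem~\ref{chordal} and Corollary~\ref{chordal-corollary}). First I would compute the block-cut tree of $G$ in linear time; if $G$ is a single block we pass directly to the last step, otherwise we root the tree at an arbitrary block $B_r$ and process the remaining blocks bottom-up, handling a block only once every block in its subtree is done, all the while maintaining an accumulating target set $S$ (initially $\emptyset$) and the current threshold function.

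When a non-root block $B$ is processed, let $v$ be the unique cut-vertex of $B$ on the path to $B_r$ and write $G=B\oplus_v G'$, where $G'$ is obtained from $G$ by deleting $V(B)\setminus\{v\}$; since $B$ is a leaf of the not-yet-processed part of the tree, $G'$ is connected, chordal, and still has all thresholds $\le 2$. Let $\theta_1$ be the threshold function on $B-v$ of Theorem~\ref{cut-vertex}. The essential point — the reason a naive recursion does not suffice — is that Theorem~\ref{cut-vertex} demands an optimal target set for $(B-v,\theta_1)$ lying in the family ${\cal F}$ of those that maximize $|N_B(v)\cap[S]_\theta^B|$, not an arbitrary one. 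If $B$ is an edge $\{u,v\}$ we take $S_1=\emptyset$ when $\theta_1(u)\le 0$ and $S_1=\{u\}$ otherwise. If $B$ is $2$-connected we apply Lemma~\ref{main-lemma-chordal-algorithm} with its graph taken to be $B$: its cases $(a)$--$(e)$ are exhaustive and each exhibits an explicit member of ${\cal F}$ that is $\emptyset$ or a singleton, together with enough information about $[S_1]_\theta^B$ (and in the few residual subcases we simply run the activation process from $S_1$ inside $B$). We then set $S\leftarrow S\cup S_1$, let $k=|N_B(v)\cap[S_1]_\theta^B|$, replace $\theta(v)$ by $\theta(v)-k$ (this is the $\theta_2$ of Theorem~\ref{cut-vertex}), and recurse on $(G',\theta)$. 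By Theorem~\ref{cut-vertex}, iterating this peeling is correct: once every non-root block is handled, $S$ together with any optimal target set for $(B_r,\theta)$, under the updated $\theta$, is an optimal target set for $(G,\theta)$.

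It remains to solve $(B_r,\theta)$ with all thresholds $\le 2$. If $B_r$ is an edge this is an $O(1)$ case analysis. If $B_r$ has at least three vertices, hence is $2$-connected, then by Theorem~\ref{chordal} and Corollary~\ref{chordal-corollary} we have $\mbox{min-seed}(B_r,\theta)=2$ precisely when $\theta\equiv 2$, and $\mbox{min-seed}(B_r,\theta)\le 1$ otherwise; we then decide among $0,1,2$. If the activation process from $\emptyset$ already activates all of $B_r$ — equivalently, by Lemma~\ref{main-lemma-chordal-algorithm}$(f)$, property ${\cal Q}_1$ or ${\cal Q}_2$ holds — we add nothing. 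Otherwise, if some $w$ has $\theta(w)<2$, pick any neighbor $x$ of $w$: then $\{x\}$ activates $w$, so $\{x,w\}$ is a complete subgraph of size $2$ and Lemma~\ref{chordal-key-lemma} gives $[\{x\}]_\theta^{B_r}=V(B_r)$, so we add $x$. Otherwise $\theta\equiv 2$, and by Theorem~\ref{chordal}$(a)$ with Lemma~\ref{chordal-key-lemma} the two endpoints of any edge of $B_r$ form an optimal target set of size $2$, which we add. Finally we output the accumulated set.

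For the running time: the block-cut tree is built in linear time; each block $B$ is processed in $O(|V(B)|+|E(B)|)$ time — testing $I\cap N_G(v)\ne\emptyset$, testing the properties ${\cal P}_1,{\cal P}_2,{\cal Q}_1,{\cal Q}_2$, bucket-sorting thresholds, and running one activation process are each linear in the size of $B$ — and $\sum_B(|V(B)|+|E(B)|)=O(|V(G)|+|E(G)|)$; hence the algorithm is linear overall. The main obstacle, and the place where Lemma~\ref{main-lemma-chordal-algorithm} does the real work, is exactly the constraint that the set chosen on $B-v$ lie in ${\cal F}$: one must verify that the lemma's case distinction is exhaustive and that in every case a member of ${\cal F}$ is $\emptyset$ or a singleton (so $\mbox{min-seed}(B-v,\theta_1)\le 1$ always), which is what keeps the per-block computation linear and makes the whole scheme go through.
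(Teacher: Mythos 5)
Your proposal is correct and follows essentially the same route as the paper: peel pendant blocks off one at a time via Theorem~\ref{cut-vertex}, using Lemma~\ref{main-lemma-chordal-algorithm} to supply a member of ${\cal F}$ of size at most one on each $2$-connected pendant block, and finish the final block with Lemma~\ref{chordal-key-lemma} and Corollary~\ref{chordal-corollary}. You are merely more explicit than the paper about the block-cut tree, the $K_2$-block case (which the paper delegates to Lemma~\ref{K_n-C_n}), and the per-block linear-time accounting.
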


 \begin{proof}
 Let $G_1$ be a block of $G$ which contains exactly one
 cut vertex $v$ of $G$.
 If $G$ is not $2$-connected, then $G$ can be written as the
 following form: $G=G_1\oplus_v G_2$, where $G_2$ is an
 induced subgraph of $G$ and is also chordal.
 To prove the theorem, we omit the easy case $G_1=K_2$,
 which follows from Lemma \ref{K_n-C_n}.
 We only consider the case that
 $G_1$ is a $2$-connected chordal graph.
 By using Lemma \ref{main-lemma-chordal-algorithm},
 we can in linear time in terms of the size of $G_1$
 find an optimal target sets $S_1$ for
 $(G_1-v,\theta_1)$ such that $S_1$ maximizes the size of the set
 $N_{G_1}(v)\cap [S_1]_\theta ^{G_1}$ and
 compute $|N_{G_1}(v)\cap [S_1]_\theta^{G_1}|$.

  Next, we want to find an optimal target set $S_2$ for
  $(G_2,\theta_2)$, where $\theta_2$ is a threshold function of
  $G_2$ which is the same
  as the function $\theta$, except that
  $\theta_2(v)=\theta(v)-|N_{G_1}(v) \cap [S_1]^{G_1}_\theta|$.
  If $G_2$ is a $2$-connected chordal graph, then
  $S_2$ can be found in linear time in terms of the size of $G_2$
  by using
  Lemma \ref{chordal-key-lemma} and Corollary
  \ref{chordal-corollary}, and hence
  an optimal target set $S_1\cup S_2$ for $(G,\theta)$
  can be found in linear time by using Theorem \ref{cut-vertex}.

  If $G_2$ has a cut vertex $v'$ and a pendent block $G_{21}$ such
  that $G_2=G_{21}\oplus_{v'}G_{22}$,
  then we can repeat the arguments in the previous paragraphs
  and use Theorem \ref{cut-vertex} to find the desired $S_2$ in
  linear time in terms of the size of $G_2$, and hence
 an optimal target set for $(G,\theta)$ can be found in
 linear time.
 \end{proof}

  \section{Hamming graphs}
  \label{sect-hamming-graphs}
 %
 %
 %
 %
 %
 Given two graphs $G$ and $H$, their {\em Cartesian product} is the
 graph $G\Box H$ with vertex set $V(G) \times V(H)$
 and edge set
 $\{(g,h)(g',h'):
 gg' \in E(G)$ with $h=h'$,
 or
 $g=g'$ with $hh' \in E(H)\}$.
 The Cartesian product is commutative and
 associative (see page 29 of \cite{sandi}).
 A {\em Hamming graph} is a Cartesian product of nontrivial complete graphs,
 i.e., of the form $K_{n_1}\Box K_{n_2}\Box \cdots \Box K_{n_t}$
 for some integers $n_1,\ldots,n_t\geq 2,t\geq 1$, which is also denoted as
 $\prod_{i=1}^{t}K_{n_i}$.
 Note that $\prod_{i=1}^{t}K_{n_i}$ has vertex set
 $V(K_{n_1})\times V(K_{n_2})\times \cdots \times V(K_{n_t})$.

 Let $u=(u_1,\ldots,u_{t})$ and $v=(v_1,\ldots,v_{t})$ be two
 vertices of $\prod_{i=1}^{t}K_{n_i}$.
 The {\em Hamming distance} $H(u,v)$ between $u$ and $v$ is
 the number of coordinate positions in which $u$ and $v$ differ.
 Note that there is an edge
 between $u$ and $v$
 if and only if $H(u,v)=1$.
 For $S_1,S_2\subseteq V(\prod_{i=1}^{t}K_{n_i})$, denote
 by $d(S_1,S_2)$ the value
 $\min\{H(u,v):u\in S_1, v\in S_2\}$.
 Let $[i,j]$ denote the set of integers $k$ such that
 $i\leq k\leq j$.
 For $A\subseteq [1,t]$, if $u_i=v_i$ for all $i\in A$,
 then
 we write $u_{|A}=v_{|A}$.
 Let $u_A$ denote the set of vertices $x$ in $\prod_{i=1}^{t}K_{n_i}$
 such that $x_{|A}=u_{|A}$.
 The proof of the following claim is straightforward and hence omitted.
 %
 %
 %
 \begin{claim}
 \label{claim-hamming-graph} Let $u,v,w$ be three distinct vertices of
 $\prod_{i=1}^{t}K_{n_i}$
 and $u_{|A}=v_{|A}$ for some set $A\subseteq [1,t]$.
 If $w$ is adjacent to both $u$ and $v$, then $w_{|A}=u_{|A}=v_{|A}$.
 \end{claim}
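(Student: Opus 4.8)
The plan is to argue directly from the coordinate description of adjacency in a Hamming graph. Since $w$ is adjacent to $u$ we have $H(w,u)=1$, so there is a \emph{unique} coordinate $i_0\in[1,t]$ at which $w$ and $u$ differ, with $w_i=u_i$ for every $i\ne i_0$; likewise $w$ adjacent to $v$ gives a unique coordinate $j_0$ with $w_i=v_i$ for every $i\ne j_0$. To prove the claim it then suffices to show that $A$ contains neither $i_0$ nor $j_0$: once this is known, $w_i=u_i$ and $w_i=v_i$ for every $i\in A$, which is exactly $w_{|A}=u_{|A}=v_{|A}$.

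The key step is a short case analysis that exploits $u\ne v$ together with $u_{|A}=v_{|A}$. Suppose, for contradiction, that $i_0\in A$. Then $u_{i_0}=v_{i_0}$ because $u_{|A}=v_{|A}$. If $i_0=j_0$, then $u$ and $v$ agree at $i_0$ and also agree at every coordinate $i\ne i_0$ (each of $u_i,v_i$ equals $w_i$ there), so $u=v$, contradicting the hypothesis. If $i_0\ne j_0$, then $w_{i_0}=v_{i_0}$ since $w$ and $v$ differ only at $j_0$, whence $w_{i_0}=u_{i_0}$, contradicting the choice of $i_0$. Hence $i_0\notin A$, and the same argument with the roles of $u$ and $v$ interchanged gives $j_0\notin A$. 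This completes the proof.

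I do not anticipate any real obstacle; the argument is elementary coordinate-chasing (which is presumably why the paper omits it). The single point requiring care is the subcase $i_0=j_0$, where one must genuinely invoke $u\ne v$, not merely compare both vertices with $w$, to force the contradiction — otherwise that case is not actually closed.
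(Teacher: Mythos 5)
Your proof is correct: the case analysis on whether the two disagreement coordinates $i_0$ and $j_0$ coincide is handled properly, and you rightly identify that the subcase $i_0=j_0\in A$ is the one place where the distinctness of $u$ and $v$ must be invoked. The paper omits its own proof of this claim as ``straightforward,'' so there is nothing to compare against; your coordinate-chasing argument is exactly the intended elementary verification.
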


 %
 %
 %
 %
 \begin{lemma}
 \label{key_observation}
 Suppose $G=(V,E)$ is the Hamming graph $\prod_{i=1}^{t}K_{n_i}$.
 Let $x,y\in V$, $i,j\in [1,t]$ and $A,B\subseteq [1,t]$.
 The following properties hold.\\
 $(a)$
 If $xy\in E$ and $x_i\not= y_i$,
 then $[x_A\cup \{y\}]^G_2=x_{A\setminus \{i\}}$.\\
 $(b)$ $[x_A\cup x_B]^G_2=x_{A\cap B}$.\\
 $(c)$ If $xy\in E$ and $x_i\not= y_i$,
 then $[x_A\cup y_B]^G_2=x_{(A\cap B)\setminus \{i\}}$.\\
 $(d)$ If $H(x,y)=2$, $x_i\not=y_i$, $x_j\not=y_j$ and $i\not=j$,
 then $[x_A\cup y_B]^G_2=x_{(A\cap B)\setminus \{i,j\}}$.\\
 $(e)$ If $d(x_A,y_B)\geq 3$, then $[x_A\cup y_B]_2^G=x_A\cup y_B$.
 \end{lemma}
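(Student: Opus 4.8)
The plan is to prove part $(b)$ first and then deduce the other four parts from it. Three elementary facts are used throughout. First, $S\mapsto[S]^G_2$ is a closure operator: if a vertex $v$ has at least two neighbours in $[S]^G_2$ then $v\in[S]^G_2$, and $S\subseteq T\subseteq[S]^G_2$ implies $[T]^G_2=[S]^G_2$. Second, for every $C\subseteq[1,t]$ the induced subgraph $G[x_C]$ is again a Hamming graph, since two vertices of $x_C$ are adjacent in $G$ exactly when they differ in one coordinate, which then lies outside $C$. Third, $[x_C]^G_2=x_C$: a vertex outside $x_C$ cannot have two distinct neighbours in $x_C$ --- otherwise Claim~\ref{claim-hamming-graph} applied with the set $C$ would place it inside $x_C$ --- so the process starting from $x_C$ halts at once. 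Granting these, part $(e)$ is immediate: suppose $[x_A\cup y_B]^G_2\ne x_A\cup y_B$ and let $z$ be a vertex activated in the first round; then $z\notin x_A\cup y_B$, and $z$ has at least two neighbours in $x_A\cup y_B$. Since $d(x_A,y_B)\ge 3$ forces $x_A\cap y_B=\emptyset$, either two of these neighbours lie in $x_A$ (and then $z\in x_A$ by Claim~\ref{claim-hamming-graph}), or two lie in $y_B$ (similarly $z\in y_B$), or there are neighbours $w_1\in x_A$ and $w_2\in y_B$, giving $H(w_1,w_2)\le H(w_1,z)+H(z,w_2)=2<d(x_A,y_B)$. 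Each case is impossible, so $(e)$ holds.

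For $(b)$, the inclusion $[x_A\cup x_B]^G_2\subseteq x_{A\cap B}$ follows by induction on the number of activation rounds: the initial set $x_A\cup x_B$ lies in $x_{A\cap B}$, and a vertex activated later has two distinct already-active neighbours, which lie in $x_{A\cap B}$ by induction, so by Claim~\ref{claim-hamming-graph} applied with the set $A\cap B$ that vertex lies in $x_{A\cap B}$ as well. The reverse inclusion $x_{A\cap B}\subseteq[x_A\cup x_B]^G_2$ is the crux. The plan is to fix $z\in x_{A\cap B}$ and induct on $h=H(z,x)$, writing $D$ for the set of coordinates (of size $h$) on which $z$ and $x$ differ; note $D\cap(A\cap B)=\emptyset$. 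If $D\cap B=\emptyset$ then $z$ agrees with $x$ on $B$, so $z\in x_B$ is already active; symmetrically if $D\cap A=\emptyset$. Otherwise $D$ contains two \emph{distinct} coordinates $\alpha\notin B$ and $\beta\notin A$: if this failed, $D\setminus B$ and $D\setminus A$ would both equal a single set $\{c\}$, and then every coordinate of $D$ other than $c$ would lie in $A\cap B$, contradicting $D\cap(A\cap B)=\emptyset$ unless $h\le 1$. Now reset the $\alpha$-th coordinate of $z$ to $x_\alpha$, respectively the $\beta$-th to $x_\beta$: this produces two distinct neighbours of $z$, each again in $x_{A\cap B}$ and at distance $h-1$ from $x$, hence already active by induction, so $z$ is activated. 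This establishes $(b)$.

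The remaining parts are reductions to $(b)$ using the closure calculus and the identity $[x_C]^G_2=x_C$; write $x^{(k\to c)}$ for the vertex obtained from $x$ by resetting its $k$-th coordinate to $c$. For $(c)$: $xy\in E$ with $x_i\ne y_i$ means $x$ and $y$ differ exactly in coordinate $i$, so $x\in x_A$ and $y\in y_B$ lie in the target set, and every vertex of the common line $L:=x_{[1,t]\setminus\{i\}}=y_{[1,t]\setminus\{i\}}$ other than $x$ and $y$ is adjacent to both, whence $L\subseteq[x_A\cup y_B]^G_2$. Applying $(b)$ then gives $[x_A\cup L]^G_2=x_{A\setminus\{i\}}$ and $[y_B\cup L]^G_2=y_{B\setminus\{i\}}=x_{B\setminus\{i\}}$ (the last step since $x$ and $y$ agree off $\{i\}$), so $x_{A\setminus\{i\}}\cup x_{B\setminus\{i\}}\subseteq[x_A\cup y_B]^G_2$, and a further application of $(b)$ yields $x_{(A\cap B)\setminus\{i\}}\subseteq[x_A\cup y_B]^G_2$; since also $x_A\cup y_B\subseteq x_{(A\cap B)\setminus\{i\}}$, equality holds. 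Part $(a)$ is the special case $B=[1,t]$ of $(c)$, where $y_B=\{y\}$. For $(d)$: $H(x,y)=2$ with $x_i\ne y_i$, $x_j\ne y_j$, $i\ne j$ means $x$ and $y$ differ exactly in coordinates $i,j$, and the two further corners $p=x^{(i\to y_i)}$, $q=x^{(j\to y_j)}$ of the $4$-cycle $x,p,y,q$ are each adjacent to both $x$ and $y$, so $p,q\in[x_A\cup y_B]^G_2$; then each of the lines $x_{[1,t]\setminus\{i\}}$, $x_{[1,t]\setminus\{j\}}$, $y_{[1,t]\setminus\{i\}}$, $y_{[1,t]\setminus\{j\}}$ is spanned by two of the already-active vertices $x,y,p,q$, hence is activated, and repeated use of $(b)$ puts $x_{A\setminus\{i,j\}}$ and $x_{B\setminus\{i,j\}}=y_{B\setminus\{i,j\}}$ into $[x_A\cup y_B]^G_2$, hence also $x_{(A\cap B)\setminus\{i,j\}}$; combined with $x_A\cup y_B\subseteq x_{(A\cap B)\setminus\{i,j\}}$ this proves $(d)$. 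The one real obstacle is the reverse inclusion in $(b)$ --- specifically, the combinatorial step of extracting the two \emph{distinct} coordinates $\alpha,\beta$; the rest is the bookkeeping that single fibres are activation-closed, that each $G[x_C]$ is a Hamming graph, and the monotonicity and idempotence of $[\,\cdot\,]^G_2$.
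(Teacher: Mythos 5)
Your proof is correct, but it is organized around a different core induction than the paper's. The paper takes part $(a)$ as the atomic step: for $i\in A$ it reduces to $A=[1,j]$ and proves $[x_{[1,j]}\cup\{y\}]_2^G=x_{[2,j]}$ by downward induction on $j$, enlarging the activated fibre one freed coordinate at a time; it then obtains $(b)$ by applying $(a)$ once for each $i\in A\setminus B$, and derives $(c)$ and $(d)$ from $(a)$ and $(b)$. You invert the dependencies: you prove $(b)$ first and directly, by inducting on the Hamming distance $h=H(z,x)$ of an individual vertex $z\in x_{A\cap B}$ and exhibiting two distinct neighbours of $z$ in $x_{A\cap B}$ at distance $h-1$ from $x$; the extraction of two \emph{distinct} coordinates $\alpha\in D\setminus B$ and $\beta\in D\setminus A$ is exactly where $D\cap(A\cap B)=\emptyset$ is used, and your case analysis there (failure forces $h\le 1$, which is absorbed by the cases $D\cap A=\emptyset$ or $D\cap B=\emptyset$) is sound. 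You then get $(c)$ from $(b)$ via the common line $x_{[1,t]\setminus\{i\}}$, recover $(a)$ as the special case $B=[1,t]$ of $(c)$, and prove $(d)$ with the same $4$-cycle $x,p,y,q$ that the paper uses. Both arguments rest on the same upper-bound mechanism --- Claim \ref{claim-hamming-graph} makes every fibre $x_C$ activation-closed --- so the real difference is how the lower bound $x_{A\cap B}\subseteq[x_A\cup x_B]_2^G$ is certified: the paper's coordinate-peeling induction mirrors the explicit seed constructions used later for Theorem \ref{Hamming_(G,2)}, while your distance induction is more local, avoids the reduction to $A=[1,j]$, and makes $(a)$ a corollary rather than the foundation.
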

 \begin{proof} (a)
 First let us consider the case of $i\not\in A$.
 From Claim \ref{claim-hamming-graph} and the fact
 $x_{|A}=y_{|A}$, we see that $[x_A\cup \{y\}]^G_2=x_{A}$.
 Now we consider the remaining case $i\in A$.
 To prove this case it suffices to consider
 the case that $i=1$ and $A=[1,j]$.
 We want to prove, by induction on $j$, that
 $[x_{[1,j]}\cup \{y\}]^G_2=x_{[2,j]}$ for
 $j=t,t-1,\ldots,1$.
 For $j=t$, we see that
 $[x_{[1,j]}\cup \{y\}]^G_2=[\{x,y\}]^G_2$.
 Since $x_{[2,t]}=y_{[2,t]}$, it follows from Claim
 \ref{claim-hamming-graph} that if $w\in [\{x,y\}]^G_2$
 then $w_{|[2,t]}=x_{|[2,t]}=y_{|[2,t]}$, and hence $w\in x_{[2,t]}$.
 That is $[\{x,y\}]^G_2\subseteq x_{[2,t]}$.
 Since any vertex in $x_{[2,t]}\setminus \{x,y\}$ is adjacent to both $x$ and
 $y$, it follows that $[\{x,y\}]^G_2\supseteq x_{[2,t]}$.
 Therefore $[\{x,y\}]^G_2= x_{[2,t]}$.

 Next, we assume that
 $[x_{[1,j]}\cup \{y\}]^G_2=x_{[2,j]}$ holds for some
 $j\in [2,t]$.
 From this induction hypothesis it follows that
 $x_{[2,j]}\subseteq [x_{[1,j-1]}\cup \{y\}]^G_2$.
 For any vertex $w$ in $x_{[2,j-1]}$,
 either $w\in x_{[2,j]}\cup x_{[1,j-1]}$ or
 $w$ is adjacent to at least one vertex in $ x_{[2,j]}$
 and at least one vertex in $x_{[1,j-1]}$.
 Thus $x_{[2,j-1]}\subseteq [x_{[1,j-1]}\cup \{y\}]^G_2$.
 On the other hand, by the fact $x_{[2,j-1]}=y_{[2,j-1]}$ and
 Claim \ref{claim-hamming-graph}, we also see that
 $x_{[2,j-1]}\supseteq [x_{[1,j-1]}\cup \{y\}]^G_2$.
 Therefore $[x_{[1,j-1]}\cup \{y\}]^G_2=x_{[2,j-1]}$,
 this completes the proof of Lemma \ref{key_observation}(a).

 (b) Since for any $i\in A\setminus B$, there exists a vertex
 $y\in x_B$ such that $xy\in E$ and $x_i\not= y_i$,
 by Lemma \ref{key_observation}(a), it follows that
 $[x_A\cup x_B]^G_2\supseteq x_{A\setminus (A\setminus B)}
 =x_{A\cap B}$.
 We note that if a vertex $w$ is adjacent to at least two
 vertices in $x_A\cup x_B$, then, by Claim \ref{claim-hamming-graph},
  it must be the case that
 $w_{|A\cap B}=x_{|A\cap B}$. Therefore
 $[x_A\cup x_B]^G_2\subseteq x_{A\cap B}$.
 We conclude that $[x_A\cup x_B]^G_2= x_{A\cap B}$.

 (c) Lemma \ref{key_observation}(a) shows that
 $[x_A\cup y_B]^G_2\supseteq [x_A\cup \{y\}]^G_2=x_{A\setminus
 \{i\}}$, and hence $[x_A\cup y_B]^G_2=[x_{A\setminus
 \{i\}}\cup y_B]^G_2$,
 since $x_A\subseteq x_{A\setminus \{i\}}$.
  It follows that $[x_A\cup y_B]^G_2=[y_{A\setminus \{i\}}\cup y_B]^G_2
  =y_{(A\setminus \{i\})\cap B}=x_{(A\cap B)\setminus \{i\}}$,
  by Lemma \ref{key_observation}(b) and the fact that
   $x_{A\setminus \{i\}}=y_{A\setminus \{i\}}$.

   (d) Without loss of generality, consider only the
   case $\{i,j\}=\{1,2\}$.
   Clearly there exist two vertices $w,z$ in $G$
   such that $(w_1,w_2)=(y_1,x_2)$, $(z_1,z_2)=(x_1,y_2)$ and
   $w_{|[3,t]}=z_{|[3,t]}=x_{|[3,t]}=y_{|[3,t]}$.
   Since $w$ and $z$ are each adjacent to both $x$ and $y$,
   we see that $x_A\cup y_B$ influences $\{w,z\}$ in
   the social network $(G,2)$.
   It follows that $[x_A\cup y_B]^G_2\supseteq
   [x_A\cup \{w\}\cup \{z\}]_2^G$ and
   $ [x_A\cup y_B]^G_2\supseteq [y_B\cup \{w\}\cup \{z\}]^G_2$.
   By using Lemma \ref{key_observation}(a) twice, we see that
   $[x_A\cup \{w\}\cup \{z\}]_2^G\supseteq x_{A\setminus \{1,2\}}$,
   and hence
   $[y_B\cup \{w\}\cup \{z\}]_2^G\supseteq y_{B\setminus
   \{1,2\}}$.
   By the fact $y_{B\setminus  \{1,2\}}=x_{B\setminus  \{1,2\}}$ and
   using Lemma \ref{key_observation}(b), we get that
   $ [x_A\cup y_B]^G_2 \supseteq
   [x_{A\setminus  \{1,2\}}\cup x_{B\setminus  \{1,2\}}]^G_2
   =x_{(A\cap B)\setminus \{1,2\}}$.
   Since $(x_A\cup y_B)\subseteq x_{(A\cap B)\setminus \{1,2\}}$,
   we conclude that $ [x_A\cup y_B]^G_2
   =x_{(A\cap B)\setminus \{1,2\}}$.

   (e) For a vertex $w$ in $V\setminus (x_A\cup y_B)$,
   by Claim \ref{claim-hamming-graph},
   we see that $w$ cannot be adjacent to two distinct vertices in
   $x_A$ (resp. $y_B$). Note that since $d(x_A,y_B)\geq 3$
   there is no vertex $w$ in $V\setminus (x_A\cup y_B)$ that
   is adjacent to one vertex in $x_A$ and is also adjacent to one
   vertex in $y_B$. This completes the proof of (e).
 \end{proof}

 For $\mathcal{U}\subseteq [1,t]$, let $\overline{\mathcal{U}}$ denote the set
 $[1,t]\setminus \mathcal{U}$.
 Using the notation and results in Lemma \ref{key_observation},
 we immediately obtain the following:
 %
 %
 %
 %
 %
 %
 \begin{claim}
 \label{claim_set_distance}
 $(a)$ If $x_A\cap y_B\not= \emptyset$, then
 $[x_A\cup y_B]_2^G=x_{A\cap B}$.\\
 $(b)$ If $d(x_A, y_B)=1$,
 then $[x_A\cup y_B]_2^G=x_{ A\cap B\cap \overline{\{i\}}}$ for some $i\in
 [1,t]$.\\
  $(c)$ If $d(x_A, y_B)=2$,
 then $[x_A\cup y_B]_2^G=x_{A\cap B\cap \overline{\{i,j\}}}$ for some $i,j\in
 [1,t]$.
 \end{claim}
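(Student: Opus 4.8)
The plan is to deduce all three parts directly from Lemma \ref{key_observation} by passing to convenient representatives of the sets $x_A$ and $y_B$. The single fact I will use repeatedly is that $x_A$ depends only on the values $x_{|A}$: if $u_{|A}=x_{|A}$ then $u_A=x_A$, and symmetrically $v_B=y_B$ whenever $v_{|B}=y_{|B}$. In each case the index set produced by Lemma \ref{key_observation} is contained in $A$ (or in $B$), so its output can be rewritten with $x$ (or $y$) as representative, which is exactly the form appearing in the claim.

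For part (a) I would pick $z\in x_A\cap y_B$. Then $z_{|A}=x_{|A}$ and $z_{|B}=y_{|B}$, so $x_A\cup y_B=z_A\cup z_B$, and Lemma \ref{key_observation}(b) gives $[z_A\cup z_B]_2^G=z_{A\cap B}=x_{A\cap B}$, using $z_{|A\cap B}=x_{|A\cap B}$. For part (b), $d(x_A,y_B)=1$ supplies $u\in x_A$ and $v\in y_B$ with $uv\in E$; let $i$ be the unique coordinate with $u_i\ne v_i$. Replacing $x_A$ by $u_A$ and $y_B$ by $v_B$ and applying Lemma \ref{key_observation}(c) yields $[x_A\cup y_B]_2^G=u_{(A\cap B)\setminus\{i\}}=x_{A\cap B\cap\overline{\{i\}}}$. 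Part (c) is identical, with Lemma \ref{key_observation}(d) in place of (c): now $d(x_A,y_B)=2$ gives $u\in x_A$ and $v\in y_B$ with $H(u,v)=2$, differing in two distinct coordinates $i,j$, and one obtains $[x_A\cup y_B]_2^G=u_{(A\cap B)\setminus\{i,j\}}=x_{A\cap B\cap\overline{\{i,j\}}}$.

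I do not expect a genuine obstacle here; the only points that need a line of justification are that $d(x_A,y_B)\ge 1$ forces $x_A\cap y_B=\emptyset$ (so in parts (b) and (c) the chosen pair $u,v$ is genuinely distinct and the edge, resp.\ Hamming-distance-$2$, hypotheses of Lemma \ref{key_observation} are actually met), and that the index set returned by Lemma \ref{key_observation} really does lie inside $A$, so that the substitution $u_{(\,\cdot\,)}=x_{(\,\cdot\,)}$ is legitimate. Both are immediate from the definitions of $x_A$ and of $d(\cdot,\cdot)$.
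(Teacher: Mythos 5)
Your proposal is correct and follows exactly the route the paper intends: the paper gives no written proof of Claim~\ref{claim_set_distance}, asserting that it is obtained ``immediately'' from Lemma~\ref{key_observation}, and your argument by passing to representatives $z\in x_A\cap y_B$ (resp.\ $u\in x_A$, $v\in y_B$ realizing the distance) and invoking parts (b), (c), (d) of that lemma is precisely the intended derivation. The two justifications you flag (distinctness of $u,v$ when $d(x_A,y_B)\ge 1$, and the returned index set lying inside $A$ so the representative can be switched back to $x$) are indeed the only points needing a word, and both hold.
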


 %
 %
 %
 %
 %
 %
 \begin{theorem}
 \label{key-theorem-for-Hamming}
 Suppose $G=(V,E)$ is the Hamming graph $\prod_{i=1}^{t}K_{n_i}$
 and $S$ is a non-empty set of vertices.\\
 $(a)$ There exist vertices
 $x^1,x^2,\ldots,x^k \in V$
 and
 sets $A_1,A_2,\ldots, A_k\subseteq [1,t]$
 such that $[S]_2^G=\cup_{i=1}^k  x^i_{A_i}$
 with $d(x^i_{A_i},x^j_{A_j})\geq 3$ for any $1\leq i<j \leq k$.\\
 $(b)$ If $[S]_2^G=\cup_{i=1}^k  x^i_{A_i}$
 for some vertices
 $x^1,\ldots,x^k$ in $V$
 and some
 sets $A_1,\ldots, A_k\subseteq [1,t]$
 with $d(x^i_{A_i},x^j_{A_j})\geq 3$ for any $1\leq i<j \leq k$,
 then the following inequality holds:
 \begin{linenomath}
 $$\sum_{i=1}^k|A_i| \geq (2+t)k-2|S|.\,\,\,\,\,\,\,\, (\star)$$
 \end{linenomath}
 \end{theorem}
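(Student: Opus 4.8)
The two parts are rather different in flavour: (a) is a structural statement proved by a merging argument, while the force of (b) comes from a lower bound on the number of seeds needed to influence a \emph{single} Hamming graph, and that last bound is where I expect the real difficulty to lie.

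For (a) the plan is to build the claimed decomposition greedily. I would start from the family of singleton subcubes $\{\,s_{[1,t]}:s\in S\,\}$, each contained in $[S]_2^G$, and then merge repeatedly: while the current family has two members $x_A$ and $y_B$ with $d(x_A,y_B)\le 2$, Claim~\ref{claim_set_distance} shows $[x_A\cup y_B]_2^G$ is itself a single subcube, and since $x_A\cup y_B\subseteq[S]_2^G$ while the activation closure is monotone and idempotent, this new subcube again lies in $[S]_2^G$; I replace the two old members by it. Every step shrinks the family, so the procedure stops with a family $x^1_{A_1},\dots,x^k_{A_k}$ of subcubes pairwise at distance $\ge 3$, each contained in $[S]_2^G$, whose union still contains $S$. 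To finish I would check that a union of subcubes pairwise at distance $\ge 3$ is already closed under the threshold-$2$ rule: a vertex $w$ outside the union with two active neighbours either has both neighbours in one $x^i_{A_i}$, which by Claim~\ref{claim-hamming-graph} forces $w\in x^i_{A_i}$, or has them in two different pieces, which forces those pieces within distance $2$ -- both impossible. Hence $[S]_2^G$ equals this union, giving (a).

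For (b) the first step is to use the separation $d(x^i_{A_i},x^j_{A_j})\ge 3$ to decouple the pieces: in the process from $S$ in $G$ no vertex of a piece $x^i_{A_i}$ can ever be activated with the help of a vertex of another piece (that would bring two pieces within distance $1$), so by induction on time the process inside $x^i_{A_i}$ coincides with the process in the induced Hamming subgraph $H^i$ (of dimension $d_i:=t-|A_i|$) started from $S_i:=S\cap x^i_{A_i}$; since $S\subseteq[S]_2^G=\bigcup_i x^i_{A_i}$ and the pieces are disjoint, the $S_i$ partition $S$ and each $S_i$ influences all of $H^i$. It then suffices to prove that any seed set influencing a $d$-dimensional Hamming graph has at least $d/2+1$ vertices; granting this, $d_i\le 2|S_i|-2$ for each $i$, and summing $|A_i|=t-d_i$ over $i$ gives $\sum_i|A_i|\ge tk+2k-2\sum_i|S_i|=(2+t)k-2|S|$, which is $(\star)$.

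For the single-graph bound I would induct on $d$, with $d=0,1$ immediate, the main tool being a projection lemma: if a Hamming graph is written $H\,\Box\,K_m$ and $\pi$ forgets the $K_m$-coordinate, then $\pi([W]_2)\subseteq[\pi(W)]_2$ for every $W$, because a newly activated vertex whose fibre was otherwise inactive must acquire both of its active witnesses inside its own $H$-layer, and these then witness its activation in $H$. Given $T$ influencing a $d$-dimensional Hamming graph, if $T$ is the whole vertex set then $|T|\ge 2^d\ge d/2+1$, so assume not; then the first vertex the process activates has two active neighbours $a\ne b$ with $H(a,b)\in\{1,2\}$, and projecting away the one (resp.\ two) coordinates in which $a,b$ differ -- applying the projection lemma once (resp.\ twice) -- identifies $a$ and $b$, so the image of $T$ has at most $|T|-1$ vertices and still influences a $(d-1)$- (resp.\ $(d-2)$-) dimensional Hamming graph; the inductive hypothesis then gives $|T|\ge d/2+1$ in both cases. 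This last bound is the step I expect to be the real obstacle: the ``perimeter''-type monovariant familiar from bootstrap percolation is too weak here -- it does not even certify that the $3$-cube $K_2\,\Box\,K_2\,\Box\,K_2$ needs three seeds -- so one must exploit the product structure, and the point that the two witnesses of the first activated vertex differ in at most two coordinates, hence share a low-dimensional fibre, is exactly what makes the dimension drop when a seed is removed. Everything else -- the merging in (a), the decoupling of the pieces in (b), and the closing arithmetic -- is routine bookkeeping with the $x_A$-notation and with Claims~\ref{claim-hamming-graph}--\ref{claim_set_distance}.
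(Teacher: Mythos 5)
Your proof is correct, and part (b) takes a genuinely different route from the paper's. For (a) you do essentially what the paper does: merge subcubes at distance at most $2$ via Claim~\ref{claim_set_distance}, using monotonicity and idempotence of $[\cdot]_2^G$; your added verification that a union of subcubes pairwise at distance $\ge 3$ is closed under the threshold-$2$ rule is a detail the paper leaves implicit (it is Lemma~\ref{key_observation}(e) in disguise). For (b) the paper argues by induction on $|S|$: when $k=1$ it peels off a single seed $x$, applies the inductive hypothesis to $S\setminus\{x\}$, and uses Claim~\ref{claim_set_distance} to bound how much the surviving coordinate set $A_1$ can shrink when $x_{[1,t]}$ is merged with the $r$ pieces of $[S\setminus\{x\}]_2^G$ (losing at most $2$ coordinates per merge); when $k>1$ it splits $S$ by pieces and adds the two inequalities. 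You instead first decouple the pieces (correctly observing that distance $\ge 3$ forces each $S_i=S\cap x^i_{A_i}$ to be nonempty and to run an autonomous process inside the induced Hamming subgraph of dimension $t-|A_i|$), and then reduce $(\star)$ to the single-graph bound that any seed set influencing a $d$-dimensional Hamming graph under threshold $2$ has size at least $d/2+1$. That bound is exactly the lower-bound half of Theorem~\ref{Hamming_(G,2)}, which the paper deduces \emph{from} part (b); your argument is not circular because you prove it independently by a quotient argument --- the projection lemma $\pi([W]_2)\subseteq[\pi(W)]_2$ plus the observation that the two witnesses of the first activated vertex are within Hamming distance $2$, so collapsing the (at most two) coordinates where they differ drops the dimension by at most $2$ while losing at least one seed. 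Your route is more modular and the projection lemma is a reusable tool of independent interest; the paper's route stays entirely inside the $x_A$-calculus of Lemma~\ref{key_observation} and avoids introducing quotient graphs. Both yield the same inequality.
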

 \begin{proof} (a)
 Note that $S=\cup_{x\in S}x_{[1,t]}$ and
 $[S'\cup S^*]_2^G=[[S']_2^G\cup S^*]_2^G$ for any $S',S^*\subseteq V$.
   By using several times Claim \ref{claim_set_distance} and Lemma
 \ref{key_observation}(e), we can get
  vertices
  $x^1,x^2,\ldots,x^k \in V$
  and
  sets $A_1,A_2,\ldots, A_k\subseteq [1,t]$
  such that $[S]_2^G=\cup_{i=1}^k  x^i_{A_i}$
  with $d(x^i_{A_i},x^j_{A_j})\geq 3$ for any $1\leq i<j \leq k$.

 (b) To prove this part
 we use induction on the size of $S$.
 When $|S|=1$  (say $S=\{x^1\}$), since in this scenario $[S]_2^G=x^1_{[1,t]}$,
 it can be seen that the inequality $(\star)$ clearly holds.
 Now assume that the statement of  Theorem \ref{key-theorem-for-Hamming}(b) holds for any $S\subseteq V$
 having $|S|< \ell$.

 When $|S|=\ell \geq 2$, the proof is divided into
 cases according to the value of $k$.\\
 {\bf Case 1}. $k=1$. In this case, pick $x\in S$ and let $S'=S\setminus
 \{x\}$. Note that $S'$ is not empty.
 By Theorem \ref{key-theorem-for-Hamming}(a) we see that there are
 vertices
 $y^1,y^2,\ldots,y^r \in V$
 and
 sets $B_1,B_2,\ldots, B_r\subseteq [1,t]$
 such that $[S']_2^G=\cup_{i=1}^r y^i_{B_i}$
 having $d(y^i_{B_i},y^j_{B_j})\geq 3$ for any $1\leq i<j \leq r$.
 We have $x_{A_1}^1=[S]_2^G=
 [x_{[1,t]}\cup [S']_2^G]_2^G=[x_{[1,t]}\cup (\cup_{i=1}^r
 y^i_{B_i})]_2^G$.
 Then from Claim \ref{claim_set_distance} and Lemma
 \ref{key_observation}(e) we see that
  $A_1=(\cap_{i=1}^r B_{i})\cap \overline{\mathcal{U}}$ for some set
 $\mathcal{U}\subseteq [1,t]$
  having $|\mathcal{U}|\leq  2r$.
  Since $|S'|< \ell$, by the induction
  hypothesis, we have
  $\sum_{i=1}^r |B_i| \geq (2+t)r-2|S'|$. It follows that
  $|A_1|=t-|(\cup_{i=1}^r \overline{B_{i}})\cup \mathcal{U}|
  \geq t-\sum_{i=1}^r(t-|B_{i}|)-2r \geq
  t-rt+(2+t)r-2|S'|-2r=(2+t)-2|S|$. Thus inequality $(\star)$ holds
  in this case.\\
  {\bf Case 2}. $k>1$. In this case, let $S^*=S\cap x^1_{A_1}$ and
  $S'=S\setminus S^*$. Note that $S^*$ and $S'$ are not empty.
  Clearly $[S^*]_2^G=x^1_{A_1}$ and $[S']_2^G=\cup_{i=2}^k
  x^i_{A_i}$.
   By the induction hypothesis we see that $|A_1|\geq (2+t)-2|S^*|$
   and $\sum_{i=2}^k |A_i|\geq (2+t)(k-1)-2|S'|$.
   It follows immediately that inequality $(\star)$ holds in this
   case.
  This completes the proof of the theorem.
 \end{proof}


 %
 %
 %
 %
 %
 \begin{theorem}
 \label{Hamming_(G,2)}
 If $G=(V,E)$ is the Hamming graph $\prod_{i=1}^{t}K_{n_i}$,
 then {\rm min-seed}$(G,2)=1+\lceil{t \over 2}\rceil$.
 \end{theorem}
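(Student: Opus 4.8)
The plan is to prove both a lower bound and an upper bound on $\mbox{min-seed}(G,2)$, showing each equals $1+\lceil t/2 \rceil$.

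\medskip
\noindent \textbf{Upper bound.} First I would exhibit an explicit target set $S$ of size $1+\lceil t/2\rceil$ with $[S]_2^G=V$. The natural candidate is built from a ``base'' vertex $x$ together with $\lceil t/2\rceil$ vertices, each differing from $x$ in two coordinates (or one coordinate if $t$ is odd), chosen so that the pairs of coordinates partition $[1,t]$. Concretely, group the coordinates into $\lceil t/2\rceil$ blocks of size $\le 2$; for each block pick a vertex $y^j$ agreeing with $x$ outside that block and differing from $x$ in every coordinate of the block. Then $\{x,y^j\}$ influences $x_{\overline{B_j}}$ by Lemma~\ref{key_observation}(d) (or (a) for a singleton block), i.e.\ everything agreeing with $x$ outside block $B_j$. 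Taking the union over $j$ and applying $[S'\cup S^*]_2^G=[[S']_2^G\cup S^*]_2^G$ repeatedly together with Lemma~\ref{key_observation}(b), the accumulated active set is $x_{\cap_j \overline{B_j}}=x_{\emptyset}=V$. Hence $\mbox{min-seed}(G,2)\le 1+\lceil t/2\rceil$.

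\medskip
\noindent \textbf{Lower bound.} This is where Theorem~\ref{key-theorem-for-Hamming} does the work. Suppose $S$ is an optimal target set, so $[S]_2^G=V$. By Theorem~\ref{key-theorem-for-Hamming}(a), $V=[S]_2^G=\bigcup_{i=1}^k x^i_{A_i}$ with pairwise set-distance $\ge 3$; but distinct nonempty sets $x^i_{A_i}$, $x^j_{A_j}$ covering all of $V$ cannot have distance $\ge 3$ unless $k=1$ (indeed any two vertices in a connected Hamming graph are joined by a path, so a partition of $V$ into pieces mutually at distance $\ge 3$ forces a single piece). Thus $k=1$ and necessarily $A_1=\emptyset$ (so that $x^1_{A_1}=V$). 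Now apply Theorem~\ref{key-theorem-for-Hamming}(b) with $k=1$, $|A_1|=0$: the inequality $(\star)$ gives $0=\sum_{i=1}^k|A_i|\ge (2+t)\cdot 1-2|S|$, hence $|S|\ge (2+t)/2=1+t/2$. Since $|S|$ is an integer, $|S|\ge 1+\lceil t/2\rceil$, which combined with the upper bound finishes the proof.

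\medskip
\noindent \textbf{Main obstacle.} The routine calculations in the upper-bound construction (verifying that the blocks' contributions intersect down to $V$) are straightforward given Lemma~\ref{key_observation}. The one point requiring a little care is the step in the lower bound asserting $k=1$: I must argue that a decomposition of the \emph{whole} vertex set into parts mutually at Hamming distance $\ge 3$ is impossible for more than one part. This follows because $G$ is connected, so if $k\ge 2$ there would be an edge between two vertices lying in different parts $x^i_{A_i}$ and $x^j_{A_j}$, contradicting $d(x^i_{A_i},x^j_{A_j})\ge 3$; hence the only genuinely substantive ingredient is Theorem~\ref{key-theorem-for-Hamming}(b), already established. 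I would also double-check the parity bookkeeping: when $t$ is even, $1+t/2$ is already an integer and the construction uses $t/2$ blocks of size $2$; when $t$ is odd, one block has size $1$ and we get $1+(t+1)/2=1+\lceil t/2\rceil$, matching the ceiling in $(\star)$.
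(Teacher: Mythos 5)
Your proposal is correct and follows essentially the same route as the paper: an explicit target set of size $1+\lceil t/2\rceil$ verified via Lemma \ref{key_observation}, and the lower bound read off from inequality $(\star)$ of Theorem \ref{key-theorem-for-Hamming}(b) applied to $V=x_\emptyset$. Your construction (base vertex $x$ plus one ``double-flip'' vertex per coordinate block) is a harmless variant of the paper's $\{p^1,p^2\}\cup\{q^3,q^5,\dots\}$, and your connectivity argument for $k=1$ is correct but unnecessary, since one may simply take the decomposition $V=x_\emptyset$ with $k=1$ directly in part (b).
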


 \begin{proof}
 Note that $V=V(K_{n_1})\times V(K_{n_2})\times \cdots \times V(K_{n_t})$.
 For each $i=1,2,\ldots,t$,
 pick two distinct vertices $x_i,y_i\in V(K_{n_i})$.
 Let $x=(x_1,x_2,\ldots,x_t)$.
 For $1\leq j\leq t$, let $p^j=(p^j_1,\ldots ,p^j_t)$ be a vertex in $V$
 such that $p^j_i=x_i$ when $i\not= j$, and $p^j_j=y_j$.
 For $1\leq j\leq t-1$, let $q^j=(q^j_1,\ldots ,q^j_t)$ be a vertex in $V$
 such that $q^j_i=x_i$ when $i\not\in \{j,j+1\}$, and $q^j_j=y_j$,
 $q^j_{j+1}=y_{j+1}$.

 First, we want to show that $1+\lceil{t \over 2}\rceil$
 is an upper bound for {\rm min-seed}$(G,2)$. The proof is divided into
 two cases according to the parity of $t$.\\
 {\bf Case 1}. $t=2\ell$. Let
 $S=\{p^1,p^2\}\cup\{q^3,q^5,q^7,\ldots,q^{t-1}\}$.
 By Lemma \ref{key_observation}(d) it can be seen that
 $[\{p^1,p^2\}]_2^G=p^1_{[3,t]}=x_{[3,t]}$,
 $[\{p^1,p^2,q^3\}]_2^G=[x_{[3,t]}\cup q^3_{[1,t]}]_2^G=x_{[5,t]}$,
 and $[\{p^1,p^2,q^3,q^5\}]_2^G=[x_{[5,t]}\cup
 q^5_{[1,t]}]_2^G=x_{[7,t]}$.
 Continue in this way, we obtain $[S]_2^G=[x_{[t-1,t]}\cup
 q^{t-1}_{[1,t]}]_2^G=x_\emptyset =V$,
 which means that
 {\rm min-seed}$(G,2)\leq |S|=\ell+1=1+\lceil{t \over 2}\rceil$.\\
 {\bf Case 2}. $t=2\ell+1$. Let
 $S=\{p^1,p^2,p^3\}\cup \{q^4,q^6,q^8,\ldots,q^{t-1}\}$.
 By Lemma \ref{key_observation}(d) and the same arguments as above,
 we obtain
 $[S]_2^G=[x_{[t-1,t]}\cup
 q^{t-1}_{[1,t]}]_2^G=V$, and hence
 {\rm min-seed}$(G,2)\leq |S|=\ell+2=1+\lceil{t \over 2}\rceil$.

   To show that $1+\lceil{t \over 2}\rceil$ is also a lower bound
   bound for {\rm min-seed}$(G,2)$, let $S$ be an optimal target set
   for $(G,2)$.  Since $[S]_2^G=V=x_\emptyset$, by Theorem
   \ref{key-theorem-for-Hamming}(b), we have $|\emptyset|\geq
   (2+t)-2|S|$, that is $|S|\geq 1+{t\over 2}$. Which completes the
   proof of the theorem.
 \end{proof}

\baselineskip=12pt

\end{document}